\newcommand{\reals}{\mbox{$\mathbb R$}}
\newcommand{\nats}{\mbox{$\mathbb N$}}
\newcommand{\ints}{\mbox{$\mathbb Z$}}
\newcommand{\comment}[1]{}
\def\squarebox#1{\hbox to #1{\hfill\vbox to #1{\vfill}}}
\def\qed{\hspace*{\fill}
        \vbox{\hrule\hbox{\vrule\squarebox{.667em}\vrule}\hrule}\smallskip}
\newenvironment{proof}{\begin{trivlist}
  \item[\hspace{\labelsep}{\em\noindent Proof.~}]
  }{\qed\end{trivlist}}
\newtheorem{lemma}{Lemma}[section]
\newtheorem{theorem}[lemma]{Theorem}
\newtheorem{corollary}[lemma]{Corollary}
\newtheorem{proposition}[lemma]{Proposition}
\newtheorem{claim}[lemma]{Claim}
\newtheorem{observation}[lemma]{Observation}
\newtheorem{definition}[lemma]{Definition}
\def\squareforqed{\hbox{\rlap{$\sqcap$}$\sqcup$}}
\def\qed{\ifmmode\squareforqed\else{\unskip\nobreak\hfil
\penalty50\hskip1em\null\nobreak\hfil\squareforqed
\parfillskip=0pt\finalhyphendemerits=0\endgraf}\fi}
\newlength{\tablength}
\newlength{\spacelength}
\newcommand{\tabstar}{\hspace*{\tablength}}
\newcommand{\spacestar}{\hspace*{\spacelength}}
\def\obeytabs{\catcode`\^^I=\active}
{\obeytabs\global\let^^I=\tabstar}
{\obeyspaces\global\let =\spacestar}
\newenvironment{display}{\begingroup\obeylines\obeyspaces\obeytabs}{\endgroup}
\newenvironment{prog}{\begin{display}\parskip0pt\sf}{\end{display}}
\title{On the number of hypercubic bipartitions of an integer}
\author{
{\sl Geir Agnarsson}
\thanks{Department of Mathematical Sciences,
George Mason University, 
MS 3F2, 
4400 University Drive, 
Fairfax, VA 22030, 
{\tt geir@math.gmu.edu}}} 
\date{}
\begin{document}

\maketitle

\begin{abstract}
%\noindent
We revisit a well-known divide-and-conquer maximin recurrence
$f(n) = \max( \min(n_1,n_2) + f(n_1) + f(n_2))$ where the maximum
is taken over all proper bipartitions $n = n_1+n_2$, and we 
present a new characterization of the pairs $(n_1,n_2)$ summing to $n$
that yield the maximum $f(n) = \min(n_1,n_2) + f(n_1) + f(n_2)$. 
This new characterization allows us, for a given $n\in\nats$,
to determine the number $h(n)$ of these bipartitions that yield the said
maximum $f(n)$. We present recursive formulae for $h(n)$, a generating 
function $h(x)$, and an explicit formula for $h(n)$ in terms of a 
special representation of $n$.

\vspace{3 mm}

\noindent {\bf 2000 MSC:} 
05A15, % Exact enumeration problems, generating functions
05C35. % Extremal problems 

\vspace{2 mm}

\noindent {\bf Keywords:}
bipartition,
rectangular grid,
hypercube,
induced subgraphs,
divide-and-conquer,
divide-and-conquer maximin recurrence.
\end{abstract}

\section{Introduction}
\label{sec:intro}

The purpose of this article is to further contribute to the 
study of the maximum number of edges of 
an induced subgraph on $n$ vertices
of the hypercube $Q_k$. In order to do that, we revisit a well-known 
divide-and-conquer maximin recurrence 
and recap some of its properties. As stated later in the section,
it is easy to see that, more generally, 
the solution to this mentioned recurrence equals the
maximum number of edges an induced subgraph on $n$ 
vertices in a rectangular grid ${\ints}^k$ can have.
These considerations were in part initially inspired by 
the heuristic integer sequence 
$0,1,2,4,5,7,9,12,13,15,17,20,\ldots$~\cite[A007818]{wrong}, 
describing the maximal number of edges joining $n = 1, 2, 3, \ldots$
vertices in the cubic rectangular grid ${\ints}^3$, for which no general
formula nor procedure to compute it is given. -- First we set forth 
our basic terminology and definitions.

\paragraph{Notation and terminology}
The set of integers will be denoted by $\ints$,
the set of natural numbers $\{1,2,3,\ldots\}$ by $\nats$, 
and the set of non-negative integers $\{0,1,2,3,\ldots\}$ by ${\nats}_0$.
The base-two logarithm of a real $x$ will be denoted by $\lg x$.
%% so $y = \lg x$ iff $2^y = x$. 
Unless otherwise stated, all graphs in this article
will be finite, simple and undirected. For a graph $G$, its
set of vertices will be denoted by $V(G)$ and its set
of edges by $E(G)$. Clearly $E(G)\subseteq \binom{V(G)}{2}$
the set of all 2-element subsets of $V(G)$. We will denote
an edge with endvertices $u$ and $v$ by $uv$ instead of the
actual 2-set $\{u,v\}$. The {\em order}
of $G$ is $|V(G)|$ and the {\em size} of $G$ is $|E(G)|$.
By an {\em induced subgraph} $H$ of $G$ we mean a subgraph
$H$ such that $V(H)\subseteq V(G)$ in the usual set theoretic
sense, and such that if $u,v\in V(H)$ and $uv\in E(G)$, then
$uv\in E(H)$. If $U\subseteq V(G)$ then the subgraph of $G$ 
induced by $V$ will be denoted by $G[U]$. For $k\in\nats$ a 
{\em rectangular grid}
${\ints}^k$ in our context is a infinite graph with the point set
${\ints}^k$ as its vertices and where two points
$\tilde{x} = (x_1,\ldots,x_k)$ and $\tilde{y} = (y_1,\ldots,y_k)$
are connected by an edge iff the {\em Manhattan distance}
$d(\tilde{x},\tilde{y}) = \sum_{i=1}^k |x_i - y_i| = 1$.
So, two points are connected iff they only differ in one
coordinate, in which they differ by $\pm 1$. This Manhattan
distance measure is the metric corresponding to the
{\em 1-norm}  $\|\tilde{x}\|_1 = \sum_{i=1}^k|x_i|$ in
the $k$-dimensional Euclidean space ${\reals}^k$. The {\em hypercube} $Q_k$
is then the subgraph of the grid ${\ints}^k$ induced by
the $2^k$ points $\{0,1\}^k$. The vertices of the hypercube $Q_k$
are more commonly viewed as binary strings of length $k$ instead
of actually points in the $k$-dimensional Euclidean space. In that case the
Manhattan distance is called the called the {\em Hamming distance}. 
We will not make a specific distinction between these two slightly 
different presentations  of the hypercube $Q_k$. 

Assume we have as set of $n$ distinct vertices $U = \{u_1,\ldots,u_n\}$
in a rectangular grid and consider the induced graph $G[U]$ with
the maximum number of edges among all induced subgraphs on $n$ vertices
of the rectangular grid. We will assume its dimension $k$ to be large enough
so our considerations will not be hindered by its value in any way. 
Each vertex $u_i$ is represented by a point 
$\tilde{x}_i = (x_{i\/1},\ldots, x_{i\/k})$ in ${\ints}^k$,
so we may assume that the $j$-th coordinates 
$x_{1\/j}, x_{2\/j},\ldots,x_{n\/j}$ are not all identical,
since otherwise no induced edge in $G[U]$ will be lost
by projection $\pi_{\hat{\jmath}} : {\ints}^k \rightarrow {\ints}^{k-1}$ 
where the $j$-th coordinate has been removed. In other words,
we may assume that $|\pi_{\hat{\jmath}}(U)| \geq 2$ for each coordinate
$j$. In particular, there is a proper partition $U = U_1 \cup U_2$
where the first coordinate of each vertex in $U_1$ is less than
the first coordinate of each vertex in $U_2$. Let $|U_1| = n_1$
and $|U_2| = n_2$, so $n = n_1 + n_2$ where $n_1, n_2 \geq 1$.
Assuming that $G[U]$ has the maximum number $g(n)$ of edges such an induced graph in the grid
can have, we can have at most $\min(n_1,n_2)$ edges in $G[U]$ parallel
to the first coordinate axis. Since each edge in $G[U]$ between 
a vertex of $U_1$ and a vertex of $U_2$ must be parallel
to the first coordinate axis, all other edges in $G[U]$ are 
in the disjoint union $E(G[U_1])\cup E(G[U_2])$, which ideally
are maximally connected. Trivially we have $g(1) = 0$ and so we 
have the following.
\begin{observation}
\label{obs:grid}
For $n\in\nats$ let $g(n)$ denote the maximum number of 
edges an induced subgraph on $n$ vertices of ${\ints}^k$
can have. Then $g(1) = 0$ and 
\[
g(n) \leq \max_{\substack{n_1 + n_2 = n \\ 
                          n_1, n_2 \geq 1}} 
               \left( \min(n_1,n_2) + g(n_1) + g(n_2)\right).
\]
\end{observation}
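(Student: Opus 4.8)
The plan is to turn the informal reasoning in the paragraph preceding the statement into a clean edge-counting argument. The base case is immediate: an induced subgraph on one vertex has no edges, so $g(1) = 0$. For the inequality, fix $n \geq 2$ and let $U = \{u_1,\ldots,u_n\} \subseteq \ints^k$ be a set of $n$ vertices realizing the maximum, i.e.\ with $|E(\ints^k[U])| = g(n)$ (here $k$ is taken large enough that its value is irrelevant; formally one may regard $g(n)$ as the supremum over $k$, which is attained once $k$ is large, but the argument below works verbatim for a fixed $k$ as well).

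Next I would produce the proper bipartition. Since $n \geq 2$, the points of $U$ are not all equal, so there is a coordinate $j$ on which their coordinates are not all equal. Let $t = \min\{x_{u,j} : u \in U\}$ and set
\[
U_1 = \{u \in U : x_{u,j} \leq t\}, \qquad U_2 = \{u \in U : x_{u,j} \geq t+1\}.
\]
Then $U_1$ is nonempty (it contains a minimizer) and $U_2$ is nonempty (some $j$-th coordinate exceeds the minimum), and $U = U_1 \cup U_2$ is a disjoint union. Writing $n_i = |U_i|$, we get a proper bipartition $n = n_1 + n_2$ with $n_1, n_2 \geq 1$.

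Then I would bound $|E(\ints^k[U])|$ by sorting its edges into three classes: edges inside $U_1$, edges inside $U_2$, and edges crossing between $U_1$ and $U_2$. The edges inside $U_i$ are exactly the edges of $\ints^k[U_i]$, an induced subgraph on $n_i$ vertices, so there are at most $g(n_i)$ of them. For a crossing edge $uv$ with $u \in U_1$, $v \in U_2$: the two endpoints are at Manhattan distance $1$, hence differ in exactly one coordinate by exactly $1$; since their $j$-th coordinates differ ($x_{u,j}\le t < t+1 \le x_{v,j}$), the differing coordinate must be $j$, forcing $x_{u,j}=t$, $x_{v,j}=t+1$, and $u,v$ agreeing in every other coordinate. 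Hence each $u\in U_1$ has at most one possible crossing partner and vice versa, so the crossing edges form a matching and there are at most $\min(n_1,n_2)$ of them. Adding the three bounds yields
\[
g(n) = |E(\ints^k[U])| \;\leq\; \min(n_1,n_2) + g(n_1) + g(n_2) \;\leq\; \max_{\substack{n_1+n_2=n\\ n_1,n_2\geq 1}}\bigl(\min(n_1,n_2) + g(n_1) + g(n_2)\bigr),
\]
which is exactly the claimed inequality.

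There is no serious obstacle in this direction; the only points that deserve care are (i) pinning down the ``$k$ large enough'' convention so that the maximum defining $g(n)$ is genuinely attained (or phrasing everything with a supremum), and (ii) the key structural observation that the crossing edges form a \emph{matching}, which is what limits their number to $\min(n_1,n_2)$ rather than to $n_1 n_2$. Note that the projection reduction mentioned earlier (that one may assume every coordinate is non-constant on $U$) is not needed for this one-sided bound — a single non-constant coordinate suffices — though it will matter when establishing that the bound is tight.
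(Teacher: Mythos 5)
Your proof is correct and follows essentially the same route as the paper: a threshold cut along a coordinate on which the point set is non-constant, with the crossing edges forming a matching (hence at most $\min(n_1,n_2)$ of them) and the remaining edges counted inside the two induced halves, each bounded by $g(n_i)$. The differences are cosmetic---you make the matching argument explicit where the paper only asserts the $\min(n_1,n_2)$ bound, and you correctly note that the paper's preliminary projection onto non-constant coordinates is not needed for this one-sided inequality.
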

{\sc Remark:} It is apriori not clear that we have equality
in Observation~\ref{obs:grid}, since by insisting that we have the maximum 
number of edges the first coordinate in the $k$-dimensional grid allows, this 
restricts
the structure of both the ``upper'' and the ``lower'' 
induced subgraphs with on fewer vertices.

\paragraph{A recap of well-known results}
We next recap some known relevant properties about 
the function $f$ on the nonnegative
integers given by the following divide-and-conquer recursion.
\begin{equation}
\label{eqn:f}
f(n) \left\{ \begin{array}{ll} 
               0 \mbox{ if }n=1, \\
               \lfloor n/2\rfloor + f(\lfloor n/2\rfloor) + f(\lceil n/2\rceil) \mbox{ if } n>1.
             \end{array}
     \right.
\end{equation}
This function $f$ and its number sequence 
$(f(n))_{n=0}^{\infty} = (0,1,2,4,5,7,9,12,13,15,17,20,\ldots)$
is a well-known sequence as given in~\cite[A000788]{sum-of-1s} where it is presented by a slightly
different recursion. In fact, the behaviour of $f(n)$ is interesting also from analytic point
of view as $f(n) = \frac{n}{2}\lg n + O(n)$. The asymptotic behavior of 
$\frac{n}{2}\lg n - f(n)$ was first studied in detail in~\cite{Bellman-Shapiro}
and it has fractal-like shape. It tends to towards the 
Blancmange function~\cite{Trott}, a continuous function which is nowhere 
differentiable~\cite{Tall} on every interval $[2^i,2^{i+1}]$ between two consecutive
powers of 2. The Blancmange-like graph of $f(n) - \frac{n}{2}\lg n$ (a negative function) 
also appears in~\cite[Fig.~1, p.~256]{Mcllroy-SIAM-binary}.

If $s(n)$ denotes the sum of the digits of $n$ when expressed as a binary number
(or just the number of 1s appearing in the binary expression of $n$),
then clearly $s(n) = s(n-1) + 1$ when $n$ is odd, and $s(n) = s(n/2)$ when
$n$ is even, and therefore
\begin{equation}
\label{eqn:s}
s(n) = \left\{ \begin{array}{ll}
                 s((n-1)/2) + 1 & \mbox{ if $n$ is odd }, \\
                 s(n/2)         & \mbox{ if $n$ is even }.
               \end{array}
       \right.
\end{equation}
Also, when we express all the $n$ integers $0,1,\ldots, n-1$ as 
binary numbers, $\lfloor n/2\rfloor$ of them are odd and $\lceil n/2\rceil$ even.
From this and (\ref{eqn:s}) it is evident that
\[
\sum_{i=0}^{n-1}s(i) = \sum_{l=0}^{\lfloor n/2\rfloor - 1}s(2l+1) + \sum_{l=0}^{\lceil n/2\rceil-1}s(2l)
= \lfloor n/2\rfloor + \sum_{l=0}^{\lfloor n/2\rfloor - 1}s(l) + \sum_{l=0}^{\lceil n/2\rceil-1}s(l),
\]
which is same recursion that $f$ satisfies. Hence, we have the following
as stated in~\cite[A000788]{sum-of-1s} and \cite{Mcllroy-SIAM-binary}.
\begin{observation}
\label{obs:sum1}
For $n\in\nats$ we have $f(n) = \sum_{i=0}^{n-1}s(i)$.
\end{observation}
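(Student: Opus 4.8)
The plan is to show that the summatory function $F(n) := \sum_{i=0}^{n-1} s(i)$ obeys exactly the defining recursion (\ref{eqn:f}) of $f$, and then to conclude by (strong) induction on $n$ that $F = f$ on $\nats$.

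First I would dispose of the base case: the binary expansion of $0$ contains no $1$s, so $s(0) = 0$ and hence $F(1) = s(0) = 0 = f(1)$. For the inductive step, fix $n > 1$ and split the index set $\{0, 1, \ldots, n-1\}$ according to parity. Among these $n$ consecutive integers exactly $\lfloor n/2\rfloor$ are odd and exactly $\lceil n/2\rceil$ are even; writing the odd ones as $2l+1$ with $l = 0, \ldots, \lfloor n/2\rfloor - 1$ and the even ones as $2l$ with $l = 0, \ldots, \lceil n/2\rceil - 1$, and invoking (\ref{eqn:s}) in the form $s(2l+1) = s(l)+1$, $s(2l) = s(l)$, one gets
\[
F(n) \;=\; \sum_{l=0}^{\lfloor n/2\rfloor - 1}\bigl(s(l)+1\bigr) \;+\; \sum_{l=0}^{\lceil n/2\rceil - 1} s(l)
\;=\; \lfloor n/2\rfloor + F(\lfloor n/2\rfloor) + F(\lceil n/2\rceil).
\]
This is precisely recursion (\ref{eqn:f}), so since $F$ and $f$ agree at $n=1$ and satisfy the same recurrence, they agree everywhere on $\nats$. (Indeed the chain of displayed equalities immediately preceding the statement in the excerpt already carries out this computation; one only needs to wrap it in the induction.)

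There is no real obstacle here: the argument is entirely elementary. The only points that merit a moment's care are the parity count — verifying that $\{0,\ldots,n-1\}$ contains exactly $\lfloor n/2\rfloor$ odd and $\lceil n/2\rceil$ even integers, which is immediate by a short case split on the parity of $n$ — and the clean reindexing of the two subsums via $l \mapsto 2l$ and $l \mapsto 2l+1$. Once those are checked the identity $F = f$ follows at once, and combined with Observation~\ref{obs:grid} it situates $f$ as a natural upper bound for the grid quantity $g$.
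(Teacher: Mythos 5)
Your proposal is correct and follows essentially the same route as the paper: the paper also splits $\sum_{i=0}^{n-1}s(i)$ by parity, applies (\ref{eqn:s}) to get $\lfloor n/2\rfloor + \sum_{l<\lfloor n/2\rfloor}s(l) + \sum_{l<\lceil n/2\rceil}s(l)$, and concludes that the summatory function satisfies the same recursion as $f$. Your write-up merely makes the base case and the induction explicit, which the paper leaves implicit.
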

For $n\in\nats$ the number of digits in the binary expression of $n-1$ is 
$k = \lceil\lg n \rceil$. For each $i\in\{0,1,\ldots,n-1\}$ there is a 
corresponding binary point $\tilde{\beta}_k(i) \in \{0,1\}^k$
from the binary expression of $i$ where the last digit of $i$ is the $k$-th coordinate,
the next to last digit is the $(k-1)$-th coordinate and so forth.
\begin{proposition}
\label{prp:lower}
For $n\in\nats$ and $k = \lceil \lg n\rceil$ the $n$ points  
$\{\tilde{\beta}_k(0),\tilde{\beta}_k(1),\ldots,\tilde{\beta}_k(n-1)\}$ induce a subgraph 
in $Q_k$ with $f(n)$ edges.
\end{proposition}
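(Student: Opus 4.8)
The plan is to prove the statement by induction on $n$, showing that the specific set of points $B_k(n) = \{\tilde\beta_k(0),\ldots,\tilde\beta_k(n-1)\}$ realizes the recurrence~(\ref{eqn:f}) exactly. The base case $n=1$ is immediate since $B_1(1) = \{\tilde\beta_1(0)\} = \{0\}$ is a single vertex inducing a graph with $f(1)=0$ edges. For the inductive step with $n>1$, let $k=\lceil\lg n\rceil$ and split $\{0,1,\ldots,n-1\}$ according to the last (i.e., $k$-th) coordinate of $\tilde\beta_k(i)$: the even integers $0,2,\ldots$ form one block and the odd integers $1,3,\ldots$ form the other. There are exactly $\lceil n/2\rceil$ even integers and $\lfloor n/2\rfloor$ odd integers in this range, matching precisely the split $n=\lceil n/2\rceil+\lfloor n/2\rfloor$ appearing in the recurrence.

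The key combinatorial observation is that the map $i\mapsto \tilde\beta_k(i)$ restricted to the even integers, followed by deleting the constant last coordinate, sends $\{0,2,\ldots\}$ bijectively onto $\{\tilde\beta_{k-1}(0),\tilde\beta_{k-1}(1),\ldots,\tilde\beta_{k-1}(\lceil n/2\rceil-1)\}$, since $\tilde\beta_k(2l)$ is the binary string for $l$ followed by a $0$. Similarly, deleting the last coordinate from $\tilde\beta_k(2l+1)$ yields $\tilde\beta_{k-1}(l)$, so the odd-indexed points project onto $\{\tilde\beta_{k-1}(0),\ldots,\tilde\beta_{k-1}(\lfloor n/2\rfloor-1)\}$. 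I would need to check the dimension bookkeeping: one should verify $k-1 \geq \lceil\lg\lceil n/2\rceil\rceil$ so that the inductive hypothesis applies to the larger block (and a fortiori to the smaller), which follows from $\lceil n/2\rceil \leq 2^{k-1}$, i.e., from $n\leq 2^k$. Thus each block, viewed in $Q_{k-1}$, induces a subgraph with $f(\lceil n/2\rceil)$ and $f(\lfloor n/2\rfloor)$ edges respectively, by induction; these edges all persist in $Q_k$ since appending a constant coordinate preserves and reflects adjacency within a block.

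It remains to count the edges of $Q_k$ joining the two blocks, i.e., edges between an even-indexed point and an odd-indexed point. Such an edge must differ in exactly one coordinate, and since the two blocks differ in the last coordinate, the edge must be between $\tilde\beta_k(2l)$ and $\tilde\beta_k(2l')$ modified only in the last slot; concretely, $\tilde\beta_k(2l)$ is adjacent to $\tilde\beta_k(2l+1)$ precisely when both $2l$ and $2l+1$ lie in $\{0,\ldots,n-1\}$. The number of such pairs is exactly the number of integers $l$ with $2l+1\leq n-1$, which is $\lfloor n/2\rfloor$. Adding up, $B_k(n)$ induces $\lfloor n/2\rfloor + f(\lceil n/2\rceil) + f(\lfloor n/2\rfloor)$ edges, which equals $f(n)$ by~(\ref{eqn:f}). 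The main obstacle, and the point requiring the most care, is the verification that no "extra" cross-block edges exist beyond the $\tilde\beta_k(2l)$--$\tilde\beta_k(2l+1)$ pairs: one must argue that two binary strings for integers in $\{0,\ldots,n-1\}$ with opposite last bits agree on all earlier bits only when they are $2l$ and $2l+1$ for a common $l$, which is a short but essential parity argument.
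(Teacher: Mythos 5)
Your proof is correct, but it takes a genuinely different route from the paper's. The paper does not induct on the recurrence (\ref{eqn:f}) at all: having already established in Observation~\ref{obs:sum1} that $f(n)=\sum_{i=0}^{n-1}s(i)$, it simply notes that each point $\tilde{\beta}_k(i)$ is adjacent to exactly $s(i)$ of the \emph{earlier} points $\tilde{\beta}_k(0),\ldots,\tilde{\beta}_k(i-1)$, namely those obtained by switching one of its $1$s to a $0$, so the total edge count telescopes to $\sum_{i=0}^{n-1}s(i)=f(n)$ in one line. Your argument instead splits $\{0,\ldots,n-1\}$ by the parity of the last bit and verifies the divide-and-conquer recurrence directly on the point set; this is essentially the same odd/even decomposition the paper uses to prove Observation~\ref{obs:sum1} for the digit-sum function, transplanted to the geometric setting, so you are in effect re-deriving that observation inside your induction rather than invoking it. What your version buys is self-containedness (no appeal to the digit-sum identity) and a proof that makes the structural reason for the recurrence (\ref{eqn:f}) visible on the hypercube itself; what it costs is the dimension bookkeeping you rightly flag ($k-1\geq\lceil\lg\lceil n/2\rceil\rceil$, plus the harmless remark that padding with constant leading coordinates changes neither adjacency nor the edge count), none of which arises in the paper's count. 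Your individual steps check out: the even and odd blocks have sizes $\lceil n/2\rceil$ and $\lfloor n/2\rfloor$, they project onto initial segments $\tilde{\beta}_{k-1}(0),\ldots$ of the right lengths, and the cross edges are exactly the $\lfloor n/2\rfloor$ pairs $(2l,2l+1)$ with $2l+1\leq n-1$, since two strings with opposite last bits are adjacent iff they agree elsewhere. So both proofs are complete; yours is longer but more elementary in its dependencies.
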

\begin{proof}
For each $i\in\{1,\ldots,n-1\}$ we note that $\tilde{\beta}_k(i)$ is connected
by and edge to exactly $s(i)$ previous points $\tilde{\beta}_k(0),\ldots,\tilde{\beta}_k(i-1)$,
namely, those $s(i)$ points obtained from $\tilde{\beta}_k(i)$ by replacing each of the 1s
by a 0. By Observation~\ref{obs:sum1} the total number of edges is therefore
$\sum_{i=0}^{n-1}s(i) = f(n)$.
\end{proof}
From Observation~\ref{obs:grid} and Proposition~\ref{prp:lower} we have 
$f(n) \leq g(n)$. Also, by Observation~\ref{obs:sum1} we have 
from~\cite{S.Hart-DM-note}, \cite{Mcllroy-SIAM-binary}, \cite {Greene-Knuth}
and~\cite{Delange-sur-la} 
that $f(n)$ satisfies the well-known divide-and-conquer maximin recurrence
\begin{equation}
\label{eqn:max-f}
f(n) = \max_{\substack{n_1 + n_2 = n \\ 
                          n_1, n_2 \geq 1}} 
               \left( \min(n_1,n_2) + f(n_1) + f(n_2)\right).
\end{equation}
The proof that $f(n)$ defined by (\ref{eqn:f}) satisfies 
(\ref{eqn:max-f}) given in~\cite[pages 22 -- 23]{Greene-Knuth} is 
particularly short and slick.

From (\ref{eqn:max-f}) it is evident that $g(n)\leq f(n)$ and hence we have equality,
namely the following, as stated in~\cite{S.Hart-DM-note}.
\begin{corollary}
\label{cor:max-grid}
The maximum number of edges an induced simple graph on $n$ vertices
of a rectangular grid ${\ints}^k$ can have is 
$f(n) = \sum_{i=0}^{n-1}s(i)$, the combined number of $1$s in 
the binary expression of $0,1,\ldots,n-1$. 
\end{corollary}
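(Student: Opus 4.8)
The plan is to establish the two inequalities $f(n) \le g(n)$ and $g(n) \le f(n)$, conclude $g(n) = f(n)$, and then invoke Observation~\ref{obs:sum1}. The lower bound $f(n) \le g(n)$ is already in hand: by Proposition~\ref{prp:lower} the $n$ binary points $\tilde{\beta}_k(0),\ldots,\tilde{\beta}_k(n-1)$ span an induced subgraph of $Q_k$ with exactly $f(n)$ edges, and since $Q_k$ is an induced subgraph of the grid ${\ints}^k$, this same vertex set induces a subgraph of ${\ints}^k$ with $f(n)$ edges. Hence $g(n) \ge f(n)$.

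For the reverse inequality $g(n) \le f(n)$ I would argue by strong induction on $n$. The base case is $g(1) = 0 = f(1)$. For $n > 1$, Observation~\ref{obs:grid} gives
\[
g(n) \le \max_{\substack{n_1+n_2=n \\ n_1,n_2\ge 1}} \bigl(\min(n_1,n_2) + g(n_1) + g(n_2)\bigr).
\]
Since $1 \le n_1, n_2 < n$ in every term of this maximum, the induction hypothesis yields $g(n_1) \le f(n_1)$ and $g(n_2) \le f(n_2)$, so each term is at most $\min(n_1,n_2) + f(n_1) + f(n_2)$, and therefore
\[
g(n) \le \max_{\substack{n_1+n_2=n \\ n_1,n_2\ge 1}} \bigl(\min(n_1,n_2) + f(n_1) + f(n_2)\bigr) = f(n),
\]
where the last equality is precisely the maximin recurrence (\ref{eqn:max-f}). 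This completes the induction.

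Combining the two bounds gives $g(n) = f(n)$ for every $n \in \nats$, and Observation~\ref{obs:sum1} rewrites this as $g(n) = \sum_{i=0}^{n-1} s(i)$, as claimed. I do not anticipate a genuine obstacle here — every ingredient has been assembled in the preceding results. The one point worth flagging is that this argument never needs the equality in Observation~\ref{obs:grid}, whose validity was explicitly left open in the Remark following it: the one-sided inequality stated there, together with the already-established identity (\ref{eqn:max-f}) for $f$, suffices, and in fact the equality in Observation~\ref{obs:grid} now follows a posteriori from $g(n) = f(n)$.
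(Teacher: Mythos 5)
Your proposal is correct and follows essentially the same route as the paper: the lower bound $g(n)\geq f(n)$ comes from Proposition~\ref{prp:lower}, and the upper bound $g(n)\leq f(n)$ comes from combining Observation~\ref{obs:grid} with the maximin recurrence (\ref{eqn:max-f}) — you merely make explicit the strong induction that the paper leaves as ``evident.'' Your closing remark that only the inequality in Observation~\ref{obs:grid} is needed, with equality following a posteriori, is also exactly the paper's implicit logic.
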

{\sc Remark:} 
Note that the heuristic integer sequence~\cite[A007818]{wrong} 
and the sequence 
$(f(n))_{n=0}^{\infty} = 
(0,1,2,4,5,7,9,12,13,15,17,20,\ldots)$~\cite[A000788]{sum-of-1s} 
agree in the first twelve entries, but differ in the entries 
from and including thirteen. 
By Corollary~\ref{cor:max-grid} this means that the maximum
number of edges of an induced graph on $n$ vertices of an
arbitrary rectangular grid ${\ints}^k$ for $n\in\{1,2,\ldots, 12\}$ 
can be realized in ${\ints}^3$. 

The divide-and-conquer maximin recurrence (\ref{eqn:max-f}) 
is the best-known and most studied one, and one of the very
few with an exact solution given by Observation~\ref{obs:sum1}.
This is mainly since it occurs naturally when analysing worst-case
scenarios in sorting algorithms~\cite{Li-Reingold-maximin}
where both asymptotic results and some other
exact solutions to more general divide-and-conquer maximin recurrences
are given. The fact that $f(n)$ defined by (\ref{eqn:f})
also satisfies (\ref{eqn:max-f}) is a consequence of a special
case of the general treatment in~\cite{Li-Reingold-maximin}. 
In~\cite{Wang-tighter} some of the general asymptotic bounds 
from~\cite{Li-Reingold-maximin} are improved further. --  The 
other reason the divide-and-conquer maximin recurrence (\ref{eqn:max-f}) 
has been studied widely is because its solution $f(n)$ appears as
the answer to extremal combinatorial problems as 
in~\cite{S.Hart-DM-note} where the main result is that of 
Corollary~\ref{cor:max-grid}. In earlier articles like~\cite{Harper-wrong}
and~\cite{Bernstein-right} a procedure is given
on how to place the numbers $1,\ldots, 2^k$ on the vertices
of the hypercube $Q_k$ so the sum $\sum|i-j|$ over all neighbors
of $Q_k$ is minimized. Also, $f$ satisfying (\ref{eqn:max-f}) 
appears when studying the number of 1's in binary integers directly, 
as is done in~\cite{Mcllroy-SIAM-binary}. There the main result
is the presentation of tight closed lower and upper bounds 
for $f(n)$, but also a description of which $n_1$ and $n_2$
adding up to $n$ in (\ref{eqn:max-f}) will yield the maximum
of $f(n)$. In the next section we give a new geometric 
characterization of the those pairs of naturals numbers adding up to $n$
that yielding the maximum $f(n)$ in (\ref{eqn:max-f}). 

\section{Hypercubic bipartitions of an integer}
\label{sec:HCBP}

In his section we give a geometric characterization 
of the natural numbers $n_0$ and $n_1$ summing up to
$n$ such that $f(n) = n_1 + f(n_1) + f(n_0)$. We also
give a direct algebraic parametrization of such ordered
pairs $(n_0,n_1)$. We then enumerate them for each
fixed $n\in\nats$ in the following section.
\begin{definition}
\label{def:HCBP}
For $n\geq 2$ and $k = \lceil \lg n\rceil$, a partition
$n = n_0 + n_1$ with $n_0,n_1\geq 1$ and $n_0\geq n_1$ 
is a {\em hypercubic bipartition (HCBP)} if there
is an $i\in\{1,\ldots,k\}$ 
such that the hyperplane 
$x_i = 1/2$ splits the $n$ points 
$\{\tilde{\beta}_k(0),\tilde{\beta}_k(1),\ldots,\tilde{\beta}_k(n-1)\}$ 
into two parts containing $n_0$ points on one side of the hyperplane
and $n_1$ on the other.
\end{definition}
{\sc Remark:} Of course, a partition of $n$ is a HCBP 
iff there is an $i\in\{1,\ldots,k\}$ such that among the $n$ points 
$\{\tilde{\beta}_k(0),\tilde{\beta}_k(1),\ldots,\tilde{\beta}_k(n-1)\}$ 
there are $n_0$ of them with $i$-th coordinate $0$ and $n_1$
of them with $i$-th coordinate $1$.\footnote{this is the
reason for our change in labeling to $(n_0,n_1)$ from 
$(n_1,n_2)$ in (\ref{eqn:max-f}).}

\vspace{3 mm}

Our first objective is to prove the following equivalence.
\begin{theorem}
\label{thm:HCBP=max}
For $n\in\nats$, a partition $n = n_0 + n_1$ with $n_0\geq n_1$
is a HCBP if and only if $f(n) = n_1 + f(n_1) + f(n_0)$.
\end{theorem}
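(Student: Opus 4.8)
The plan is to prove the two implications separately. The forward one, ``HCBP $\Rightarrow$ $f(n) = n_1 + f(n_1) + f(n_0)$,'' I would get by a direct edge count in the standard configuration. Put $k = \lceil \lg n\rceil$ and $P = \{\tilde{\beta}_k(0),\ldots,\tilde{\beta}_k(n-1)\}$, so $Q_k[P]$ has exactly $f(n)$ edges by Proposition~\ref{prp:lower}. If the hyperplane $x_i = 1/2$ cuts $P$ into $P_0$ (points with $i$-th coordinate $0$, $|P_0| = n_0$) and $P_1$ (points with $i$-th coordinate $1$, $|P_1| = n_1$), split the edges of $Q_k[P]$ into three groups: an edge from $P_0$ to $P_1$ must be parallel to the $i$-th axis, and each point of $P_1$ is joined across to the point obtained by lowering its $i$-th coordinate, whose index is still in $\{0,\ldots,n-1\}$, so there are exactly $n_1$ crossing edges; and deleting the $i$-th coordinate identifies the index sets of $P_0$ and $P_1$ order-isomorphically with $\{0,\ldots,n_0-1\}$ and $\{0,\ldots,n_1-1\}$ and maps $P_0, P_1$ isomorphically onto the corresponding standard configurations inside $Q_{k-1}$ (which fit there since $n_0, n_1 \le 2^{k-1}$), so these induce $f(n_0)$ and $f(n_1)$ edges, again by Proposition~\ref{prp:lower}. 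Summing the three contributions gives the claim, and this half uses neither Observation~\ref{obs:sum1} nor (\ref{eqn:max-f}).

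For the converse I would use strong induction on $n$, with small and degenerate cases done by hand. Assume $f(n) = n_1 + f(n_1) + f(n_0)$ with $n_0 \ge n_1 \ge 1$. First force $n_0 \le 2^{k-1}$: Observation~\ref{obs:sum1} gives $f(2^{k-1}+t) = f(2^{k-1}) + t + f(t)$ for $0 \le t \le 2^{k-1}$, and feeding this (applied to $n$ and to $n_0$) into the hypothesis reduces it to $f\bigl((n_0 - 2^{k-1}) + n_1\bigr) = f(n_0 - 2^{k-1}) + f(n_1)$, which contradicts (\ref{eqn:max-f}) since both left-hand summands are at least $1$. If $n_0 = 2^{k-1}$ the bipartition is the one cut off by the leading coordinate, hence a HCBP; otherwise assume $n_0 < 2^{k-1}$ and split on the parity of $n$. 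When $n = 2\mu$, a one-line use of (\ref{eqn:max-f}) rules out a maximizing bipartition with $n_1 < \mu$ having two odd parts, so either $n_1 = \mu$ (the cut $x_k = 1/2$) or $n_0$ and $n_1$ are both even; in the latter case halving the recursions shows $(n_0/2, n_1/2)$ is a maximizing bipartition of $\mu$, and by the inductive hypothesis its coordinate cut lifts to one of $n$. When $n = 2\mu + 1$, the two parts have opposite parity; writing them as $2a$ and $2b+1$ and expanding via $f(2t) = t + 2f(t)$ and $f(2t+1) = t + f(t) + f(t+1)$, the hypothesis forces a maximizing bipartition of $\mu$ together with a maximizing bipartition of $\mu+1$ whose smaller parts differ by at most $1$, to both of which the inductive hypothesis applies.

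The hard part will be the last step in the odd case: from the knowledge that some coordinate hyperplane of $\mu$ and some coordinate hyperplane of $\mu+1$ realize the required part sizes, one must produce a single coordinate hyperplane of $n = 2\mu+1$ realizing $(n_0, n_1)$. For this I would first establish the structural fact (which is also what the promised explicit parametrization of HCBP pairs rests on) that the cut sizes $n_1^{(i)}$, namely the number of the points $\tilde{\beta}_k(0),\ldots,\tilde{\beta}_k(n-1)$ whose $i$-th coordinate equals $1$, are monotone in $i$, together with a closed formula for each consecutive difference $n_1^{(i)} - n_1^{(i+1)}$ in terms of the binary digits of $n$. Granted this interleaving, a maximizing value of $n_1$ cannot lie strictly between two consecutive cut sizes, and the two inductive witnesses for $\mu$ and $\mu+1$ can be matched to one coordinate of $2\mu+1$. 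An alternative that bypasses this analysis is to quote the known description of the maximizing pairs of (\ref{eqn:max-f}) in~\cite{Mcllroy-SIAM-binary} and verify that it coincides with the hyperplane-cut parametrization.
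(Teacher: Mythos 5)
Your forward direction is sound and in fact a little sharper than the paper's: the observation that deleting the cutting coordinate sends each side of the partition order\hyp{}preservingly onto an initial segment $\{0,\ldots,n_b-1\}$ is correct (for a fixed bit value the map is increasing and its image is downward closed), so you obtain the exact counts $n_1$, $f(n_0)$, $f(n_1)$ directly from Proposition~\ref{prp:lower}, whereas the paper settles for upper bounds plus the maximality of $f(n)$. Your reduction to $n_0\le 2^{k-1}$, the case $n_0=2^{k-1}$, and the even case (both the exclusion of two distinct odd parts via (\ref{eqn:max-f}) and the halving argument, which is Case~1 of the paper's proof) also check out.

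The gap is exactly where you yourself locate ``the hard part,'' and your sketch does not close it. In the odd case the induction hands you a cut of $\mu$ at some coordinate $i$ and a cut of $\mu+1$ at some coordinate $j$ with the right part sizes; since the cut of $2\mu+1$ at coordinate $\ell+1$ has smaller part equal to the sum of the cuts of $\mu$ and of $\mu+1$ at the \emph{same} coordinate $\ell$ (Claim~\ref{clm:2n}), you are done only if the two witnesses can be taken at a common coordinate --- equivalently, if $d_i(\mu)+d_j(\mu+1)=d_{\ell}(2\mu+1)$ for a single $\ell$ whenever $d_j(\mu+1)=d_i(\mu)\pm 1$. That is precisely the paper's Lemma~\ref{lmm:j=i}, whose proof is a case analysis over the eight sign patterns of $d_i(\mu+1)-d_i(\mu)$ and $d_j(\mu+1)-d_j(\mu)$ resting on Claims~\ref{clm:diff}, \ref{clm:didj} and \ref{clm:2n}. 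Your proposed substitute --- that ``a maximizing value of $n_1$ cannot lie strictly between two consecutive cut sizes'' --- does not follow from the monotonicity of the cut sizes (that monotonicity is Claim~\ref{clm:didj}(1) and is easy); it is essentially a restatement of the theorem being proved, and you give no mechanism for deriving it. The fallback of quoting the description of the maximizing pairs in~\cite{Mcllroy-SIAM-binary} would turn the theorem into a citation, and checking that his description coincides with the hyperplane-cut parametrization is itself the nontrivial content (the paper only asserts this equivalence in a remark, after its proof is complete). So the proposal has the right architecture but is missing the one genuinely hard lemma.
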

One direction is fairly straightforward; 
by Proposition~\ref{prp:lower} and Corollary~\ref{cor:max-grid}
the $n$ points $\beta_k(0),\ldots,\beta_k(n-1)$ form a maximally
connected subgraph of $Q_k$ with $f(n)$ edges. 
Assume $n = n_0 + n_1$ is a HCBP. Then for
some $i\in\{1,\ldots,k\}$ there 
are $n_0$ points with $i$-th coordinate $0$ and
$n_1$ points with $i$-th coordinate $1$.
Among the $f(n)$ edges precisely $n_1$ of them
are parallel to the $i$-th axis. At most $f(n_0)$
of the remaining edges connect points with $i$-th coordinate
$0$ and at most $f(n_1)$ connect points with $i$-th coordinate
$1$. By maximality of $f(n)$ we therefore have 
$f(n) = n_1 + f(n_1) + f(n_0)$, and hence the following.
\begin{observation}
\label{obs:HCBP->max}
If $n\geq 2$ and $n=n_0+n_1$ is a HCBP 
then $f(n) = n_1 + f(n_1) + f(n_0)$.
\end{observation}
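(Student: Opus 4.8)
The plan is to flesh out the argument already sketched in the paragraph preceding the statement. Fix $n\geq 2$, put $k=\lceil\lg n\rceil$, and let $P=\{\tilde{\beta}_k(0),\tilde{\beta}_k(1),\ldots,\tilde{\beta}_k(n-1)\}$ be the associated $n$ binary points in $Q_k$. By Proposition~\ref{prp:lower} the induced subgraph $Q_k[P]$ has exactly $f(n)$ edges, and by Corollary~\ref{cor:max-grid} no induced subgraph on $n$ vertices of any rectangular grid can have more. Since $n=n_0+n_1$ is a HCBP, I would first fix a coordinate $i\in\{1,\ldots,k\}$ witnessing this, so that $P$ splits as $P=P_0\cup P_1$, where $P_0$ is the set of the $n_0$ points of $P$ whose $i$-th coordinate is $0$ and $P_1$ the set of the $n_1$ points whose $i$-th coordinate is $1$ (with $n_0\geq n_1$).

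The heart of the argument is to count the $f(n)$ edges of $Q_k[P]$ according to whether they are parallel to the $i$-th axis. An edge parallel to that axis has its two endpoints differing exactly in coordinate $i$, hence joins a point of $P_0$ to a point of $P_1$; moreover, each point $\tilde{\beta}_k(b)\in P_1$ (i.e.\ $b\in\{0,\ldots,n-1\}$ with the $i$-th bit equal to $1$) is joined to $\tilde{\beta}_k(b')$ where $0\leq b'<b<n$ is obtained by clearing that bit, so $\tilde{\beta}_k(b')$ automatically lies in $P$, in fact in $P_0$. Thus there are exactly $n_1$ edges of $Q_k[P]$ parallel to the $i$-th axis. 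Every other edge joins two points agreeing in coordinate $i$ and so lies inside $Q_k[P_0]$ or inside $Q_k[P_1]$; therefore $f(n)=n_1+|E(Q_k[P_0])|+|E(Q_k[P_1])|$. Now $Q_k[P_0]$ is an induced subgraph on $n_0$ vertices of the grid $\ints^k$, so by Corollary~\ref{cor:max-grid} it has at most $f(n_0)$ edges, and likewise $|E(Q_k[P_1])|\leq f(n_1)$; this gives $f(n)\leq n_1+f(n_0)+f(n_1)$. For the matching lower bound, $(n_0,n_1)$ is a proper bipartition of $n$ with $\min(n_0,n_1)=n_1$, hence it is one of the pairs over which the maximum in the divide-and-conquer maximin recurrence (\ref{eqn:max-f}) is taken, so $f(n)\geq n_1+f(n_0)+f(n_1)$. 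Combining the two inequalities yields $f(n)=n_1+f(n_1)+f(n_0)$.

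There is no serious obstacle here: once the right picture is set up, the proof is pure bookkeeping. The only two points that call for a word of care are the claim that every edge not parallel to axis $i$ stays inside a single part $Q_k[P_0]$ or $Q_k[P_1]$, and the claim that exactly $n_1$ edges are parallel to axis $i$ — both of which rest only on elementary properties of binary representations (in particular, that clearing a $1$-bit produces a smaller, hence still admissible, integer), so the bound $|E(Q_k[P])|\le n_1 + f(n_0) + f(n_1)$ alone would already suffice. The substantive inputs are external and already available: the extremal bound of Corollary~\ref{cor:max-grid} applied to each part, and the maximin recurrence (\ref{eqn:max-f}) for the reverse inequality.
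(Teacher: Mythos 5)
Your proof is correct and follows essentially the same route as the paper: count the $f(n)$ edges of the induced subgraph on $\{\tilde{\beta}_k(0),\ldots,\tilde{\beta}_k(n-1)\}$ according to whether they are parallel to the witnessing axis $i$, bound the two remaining parts by $f(n_0)$ and $f(n_1)$ via the extremal result, and close the gap with the maximin recurrence (\ref{eqn:max-f}). You merely make explicit a detail the paper leaves implicit, namely that clearing the $i$-th bit of a point in $P_1$ lands back in the point set, so exactly $n_1$ cross edges occur.
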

To verify the other direction, we will do so in a number of small steps.
For each of them we attempt to keep
our arguments as elementary as possible.
The first one is obtaining an equivalent algebraic description of a 
HCBP of $n$. 

Consider the $n\times k$ matrix ${\mathcal{B}}_n$ where 
$k = \lceil \lg n\rceil$ whose $i$-th row vector
is the point $\beta_k(i-1)$ for $i=1,\ldots,n$. For each
$i\in\{1,\ldots,k\}$ let $d_i(n)$ denote the
difference between the number of $0$'s and the number of $1$'s 
in the $i$-th column of ${\mathcal{B}}_n$. For convenience we set $d_i(0) = 0$ for each $i$.
We then have 
\begin{equation}
\label{eqn:d}
d_i(n) = 2^{i-1} - | (n\bmod {2^i}) - 2^{i-1}| = 2^{i-1} - | n - 2^i\lfloor n/2^i\rfloor -  2^{i-1}|.
\end{equation}
If the $i$-th column of ${\mathcal{B}}_n$ has 
$n_0$ zeros and $n_1$ ones, then $n_0 - n_1 = d_i(n)$
and $n = n_0 + n_1$ is a HCBP. The converse is clear and hence
we have the following characterization.
\begin{observation}
\label{obs:HCBP=d}
For $n\geq 2$ the partition $n = n_0 + n_1$ 
with $n_0,n_1\geq 1$ and $n_0\geq n_1$ is a HCBP if and only if
$n_0 - n_1 = d_i(n)$ for some $i\in\{1,\ldots,k\}$ 
where $k = \lceil \lg n\rceil$.
\end{observation}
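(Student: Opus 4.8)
The plan is to reduce both implications to the single arithmetic fact behind (\ref{eqn:d}). \emph{Step one} is to confirm (\ref{eqn:d}) and to extract from it the inequality $0\le d_i(n)\le 2^{i-1}$, equivalently that in every column of $\mathcal{B}_n$ the number of $0$'s is at least the number of $1$'s; this is the only place any computation is needed. The bit of weight $2^{i-1}$ runs along the rows $0,1,\dots,n-1$ in blocks consisting of a run of $2^{i-1}$ zeros followed by a run of $2^{i-1}$ ones, with period $2^i$. The full blocks are balanced, while the trailing partial block, of length $r:=n\bmod 2^i$, reproduces the bit-pattern of $0,1,\dots,r-1$ and hence contributes $\min(r,2^{i-1})$ zeros and $r-\min(r,2^{i-1})$ ones. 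A one-line case split on whether $r\le 2^{i-1}$ then yields (\ref{eqn:d}), and in particular $d_i(n)\ge 0$.

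\emph{Step two} is the forward direction: suppose $n=n_0+n_1$ with $n_0\ge n_1\ge 1$ is a HCBP, and fix a witnessing hyperplane $x_i=1/2$. It separates the points with $i$-th coordinate $0$ from those with $i$-th coordinate $1$, so $\{n_0,n_1\}$ is exactly the unordered pair consisting of the number of $0$'s and the number of $1$'s in column $i$. Since $n_0\ge n_1$ while the number of $0$'s is at least the number of $1$'s, the larger part $n_0$ is the count of $0$'s and $n_1$ the count of $1$'s (this is harmless when $n_0=n_1$, where $d_i(n)=0$), so $n_0-n_1=d_i(n)$. \emph{Step three} is the converse: suppose $n_0-n_1=d_i(n)$ for some $i\in\{1,\dots,k\}$, and let $z$ and $o$ denote the numbers of $0$'s and $1$'s in column $i$ of $\mathcal{B}_n$. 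Then $z+o=n=n_0+n_1$ and $z-o=d_i(n)=n_0-n_1$, so $z=n_0$ and $o=n_1$; hence the hyperplane $x_i=1/2$ splits $\tilde{\beta}_k(0),\dots,\tilde{\beta}_k(n-1)$ into $n_0$ points on one side and $n_1$ on the other, and since the partition already satisfies $n_0\ge n_1\ge 1$ it is a HCBP by Definition~\ref{def:HCBP}.

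I do not anticipate a genuine obstacle: once (\ref{eqn:d}) is in hand the statement is an unpacking of the definition, and the only delicate point is the sign remark $d_i(n)\ge 0$, which is exactly what lets the prescribed ordering $n_0\ge n_1$ be matched to the ``$x_i=0$'' side of the splitting hyperplane. The one bookkeeping issue is that ``column $i$'' in the definition of $d_i(n)$ must be indexed (least-significant bit first) so that (\ref{eqn:d}) holds as written; since the characterization quantifies over all $i\in\{1,\dots,k\}$, only the multiset $\{d_1(n),\dots,d_k(n)\}$ enters, and the labeling of columns is immaterial.
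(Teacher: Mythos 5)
Your proof is correct and follows the same route as the paper, which treats the observation as an immediate consequence of defining $d_i(n)$ as the column-wise excess of $0$'s over $1$'s in $\mathcal{B}_n$ and then matching $(n_0,n_1)$ to the two sides of the hyperplane. You supply two details the paper leaves implicit --- the verification of (\ref{eqn:d}) together with the consequent inequality $d_i(n)\ge 0$, which is exactly what lets the larger part $n_0$ be identified with the $0$-side, and the column-indexing convention --- and both are handled correctly.
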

Viewing $i$ as fixed the graph of the map $n\mapsto d_i(n)$ 
has a zigzag like shape where each zig has length/period $2^i$.
The function $d_i$ can easily be extended to all non-negative integers $n$.
\begin{claim}
\label{clm:diff}
$d_i(m) = d_i(n)$ if and only if $m \equiv\pm n\pmod {2^i}$. 
\end{claim}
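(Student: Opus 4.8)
The plan is to use the closed form for $d_i(n)$ recorded in equation~(\ref{eqn:d}), namely
$d_i(n) = 2^{i-1} - |(n\bmod 2^i) - 2^{i-1}|$.
Since the right-hand side depends on $n$ only through $n\bmod 2^i$, write $r = n\bmod 2^i$ and $s = m\bmod 2^i$, so that $r,s\in\{0,1,\ldots,2^i-1\}$. The claim then reduces to the purely arithmetic statement that, for $r,s$ in this range,
\[
2^{i-1} - |r - 2^{i-1}| \;=\; 2^{i-1} - |s - 2^{i-1}|
\qquad\Longleftrightarrow\qquad
s \equiv \pm r \pmod{2^i}.
\]
Equivalently, cancelling the $2^{i-1}$'s and negating, it suffices to show $|r - 2^{i-1}| = |s - 2^{i-1}|$ iff $s\equiv\pm r\pmod{2^i}$.

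First I would prove the easy direction. If $s\equiv r\pmod{2^i}$ then $s=r$ (both lie in $\{0,\ldots,2^i-1\}$), so the two absolute values are trivially equal. If $s\equiv -r\pmod{2^i}$, then either $r=0$ and $s=0$, or $r\neq 0$ and $s = 2^i - r$; in the latter case $s - 2^{i-1} = 2^{i-1} - r = -(r - 2^{i-1})$, so $|s-2^{i-1}| = |r-2^{i-1}|$. This settles $(\Leftarrow)$. For the forward direction, suppose $|r-2^{i-1}| = |s-2^{i-1}|$. Then either $r - 2^{i-1} = s - 2^{i-1}$, giving $r=s$ and hence $s\equiv r\pmod{2^i}$, or $r-2^{i-1} = -(s-2^{i-1})$, giving $r + s = 2^i$, i.e.\ $s = 2^i - r \equiv -r\pmod{2^i}$. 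In both cases $s\equiv\pm r\pmod{2^i}$, completing $(\Rightarrow)$.

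The one point requiring a little care — and the only place a casual argument could slip — is the reduction from $d_i(m)=d_i(n)$ to the statement about the residues $r$ and $s$: one must observe that $d_i$ genuinely factors through $n\bmod 2^i$ (immediate from~(\ref{eqn:d})) and, conversely, that $s\equiv\pm r\pmod{2^i}$ with $r,s\in\{0,\ldots,2^i-1\}$ forces either $s=r$ or $s=2^i-r$ (the boundary case $r=s=0$ being consistent with both). Beyond that, the whole argument is a one-line case split on the sign of the absolute values, so I do not anticipate any real obstacle; the extension of $d_i$ to all of $\nats_0$ is automatic since formula~(\ref{eqn:d}) makes sense for every $n\geq 0$ and already agrees with the original definition for $0\leq n < 2^i$ where the column of ${\mathcal B}_n$ is genuinely defined.
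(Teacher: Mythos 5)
Your proof is correct and rests on the same facts as the paper's one-line argument, which simply notes that $d_i$ is the identity on $\{0,\ldots,2^{i-1}\}$, even, and $2^i$-periodic; you have just unwound those structural properties into an explicit case split on the sign of $|(n\bmod 2^i)-2^{i-1}|$. The boundary case $r=s=0$ and the reduction to residues are handled correctly, so there is nothing to fix.
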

\begin{proof}
The function $d_i:{\nats}_0\rightarrow{\nats}_0$ is the identity
on $\{0,1,\ldots, 2^{i-1}\}$, is even, and has period $2^i$.
\end{proof}
The following properties are also straightforward from the definition (\ref{eqn:d}).
\begin{claim}
\label{clm:didj}
For $i,j\in\nats$ and $m,n\in{\nats}_0$ we have
\begin{enumerate}
  \item If $i\leq j$, then $d_i(n) \leq d_j(n)$.  
  \item If $i\leq j$ and $d_j(m) = d_i(n)$, then $d_j(m) = d_i(m)$. 
\end{enumerate}
\end{claim}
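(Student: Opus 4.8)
The plan is to first record the correct geometric reading of $d_i$. Writing $n = 2^i\lfloor n/2^i\rfloor + r$ with $0\le r<2^i$, formula~(\ref{eqn:d}) says $d_i(n) = 2^{i-1} - |r-2^{i-1}| = \min(r,\,2^i-r)$, which is precisely the distance from $n$ to the nearest integer multiple of $2^i$. Alongside this I would isolate the one elementary fact that does all the work: \emph{if $x$ is any multiple of $2^i$ with $|n-x|\le 2^{i-1}$, then $d_i(n)=|n-x|$}. Indeed, any other multiple $x'$ of $2^i$ has $|x-x'|\ge 2^i$, so $|n-x'|\ge 2^i-|n-x|\ge 2^{i-1}\ge |n-x|$, and thus $x$ already realizes the minimum distance. (This is really just a restatement of Claim~\ref{clm:diff}: $d_i$ is the identity on $\{0,\ldots,2^{i-1}\}$, even, and of period $2^i$.)

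For part~1 I would argue that $i\le j$ forces $2^j\ints\subseteq 2^i\ints$, so the nearest multiple of $2^i$ to $n$ is no farther from $n$ than the nearest multiple of $2^j$; that is, $d_i(n)\le d_j(n)$. (Equivalently, and without the geometry: if $c=d_j(n)$ then $c\bmod 2^i \ge d_i(n)$, while $c\bmod 2^i \le c$, so $d_i(n)\le d_j(n)$.)

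For part~2, assume $i\le j$ and $d_j(m)=d_i(n)$. Choose a multiple $x$ of $2^j$ with $|m-x|=d_j(m)$. Then $x$ is also a multiple of $2^i$, and $|m-x|=d_j(m)=d_i(n)\le 2^{i-1}$ since every value of $d_i$ is at most $2^{i-1}$ by~(\ref{eqn:d}). Applying the isolated fact above with this particular $x$ gives $d_i(m)=|m-x|=d_j(m)$, which is exactly the assertion of part~2.

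I do not expect a genuine obstacle here — as the text says, both items follow straight from~(\ref{eqn:d}). The only point needing a moment's care is that in part~2 the hypothesis $d_j(m)=d_i(n)$ is used precisely to force $|m-x|\le 2^{i-1}$, so that $m$ lies within one half-period of a point of the coarser lattice $2^i\ints$ and the coarse and fine nearest-multiple distances agree; dropping that bound, $d_i(m)$ could be strictly smaller than $d_j(m)$ (consistent with part~1). A purely arithmetic alternative is equally short: setting $c:=d_j(m)=d_i(n)$, one has $c\le 2^{i-1}$ and $m\equiv\pm c\pmod{2^j}$, hence $m\equiv\pm c\pmod{2^i}$, and then Claim~\ref{clm:diff} together with $d_i(c)=c$ yields $d_i(m)=c=d_j(m)$.
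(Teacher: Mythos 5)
Your proof is correct. The paper offers no proof of this claim (it is simply asserted to be ``straightforward from the definition (\ref{eqn:d})''), and your argument is exactly the intended direct verification: reading $d_i(n)$ as the distance from $n$ to the nearest multiple of $2^i$ makes part~1 immediate from $2^j\ints\subseteq 2^i\ints$, and you correctly pinpoint that in part~2 the hypothesis $d_j(m)=d_i(n)$ is needed only to guarantee $d_j(m)\le 2^{i-1}$, so that the realizing multiple of $2^j$ is also the nearest multiple of $2^i$ to $m$.
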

\begin{claim}
\label{clm:2n}
For $n\in{\nats}_0$ and $i\in\nats$ we have:
\begin{enumerate}
  \item $d_{i+1}(2n) = 2d_i(n)$.
  \item $d_{i+1}(2n+1) = d_i(n+1) + d_i(n)$.
% \item If $d_i(n+1) = d_i(n) + 1$ then $d_{i+1}(2n+1) = 2d_i(n) + 1$.
% \item If $d_i(n+1) = d_i(n) - 1$ then $d_{i+1}(2n+1) = 2d_i(n) - 1$.
\end{enumerate}
\end{claim}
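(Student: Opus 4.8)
The plan is to work directly from the combinatorial reading of $d_i$ recorded just after (\ref{eqn:d}): $d_i(n)$ equals the number of integers $m\in\{0,1,\ldots,n-1\}$ whose $2^{i-1}$-bit is $0$ minus the number whose $2^{i-1}$-bit is $1$ (the signed excess of $0$'s over $1$'s in the corresponding column of ${\mathcal{B}}_n$). The only auxiliary fact needed is that the $2^i$-bit of an integer $m$ coincides with the $2^{i-1}$-bit of $\lfloor m/2\rfloor$; in particular, for every $l$ the two integers $2l$ and $2l+1$ share the same $2^i$-bit, namely the $2^{i-1}$-bit of $l$.

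For part~1, I would partition $\{0,1,\ldots,2n-1\}$ into the $n$ consecutive pairs $\{2l,2l+1\}$ for $l=0,\ldots,n-1$. Both members of each pair contribute the same $2^i$-bit, namely the $2^{i-1}$-bit of $l$, so the signed excess of $0$'s over $1$'s in the $2^i$-position over $\{0,\ldots,2n-1\}$ is exactly twice the signed excess of $0$'s over $1$'s in the $2^{i-1}$-position over $\{0,\ldots,n-1\}$; that is, $d_{i+1}(2n)=2d_i(n)$. For part~2, I would split $\{0,1,\ldots,2n\}$ into its even members $\{2l : 0\le l\le n\}$ and its odd members $\{2l+1 : 0\le l\le n-1\}$. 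Via $l\mapsto 2l$, the even members contribute the signed excess over $\{0,\ldots,n\}$ in the $2^{i-1}$-position, which is $d_i(n+1)$; via $l\mapsto 2l+1$, the odd members contribute the signed excess over $\{0,\ldots,n-1\}$, which is $d_i(n)$. Adding these gives $d_{i+1}(2n+1)=d_i(n+1)+d_i(n)$.

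An alternative route, which I would use if one prefers to stay entirely at the level of the closed form (\ref{eqn:d}), is to write $n=q2^i+r$ with $0\le r<2^i$, observe that $2n\bmod 2^{i+1}=2r$ and $(2n+1)\bmod 2^{i+1}=2r+1$, and then reduce part~1 to the trivial $|2r-2^i|=2|r-2^{i-1}|$ and part~2 to the elementary integer identity $|a|+|a+1|=|2a+1|$ applied with $a=r-2^{i-1}$; here one uses the periodicity of $d_i$ (Claim~\ref{clm:diff}) to evaluate $d_i(n+1)$ uniformly.

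I do not expect a genuine obstacle here. The only point requiring a moment's care is the boundary case in part~2 where $n+1$ is itself a multiple of $2^i$, so that $(n+1)\bmod 2^i$ is $0$ rather than $r+1$ in the algebraic route; this is handled by the periodicity of $d_i$ (equivalently, by the fact that the ``tent'' function $t\mapsto 2^{i-1}-|t-2^{i-1}|$ vanishes at both $0$ and $2^i$), and in the combinatorial route it causes no trouble at all because one counts over $\{0,\ldots,n\}$ directly.
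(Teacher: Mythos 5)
Your proposal is correct. The paper offers no proof of this claim at all (it is presented as ``straightforward from the definition (\ref{eqn:d})''), so there is nothing to compare against except the intended one-line verification; your second, algebraic route is exactly that verification: writing $n=q2^i+r$, part~1 reduces to $|2r-2^i|=2|r-2^{i-1}|$ and part~2 to the integer identity $|a|+|a+1|=|2a+1|$ with $a=r-2^{i-1}$, and you correctly note that the boundary case $r=2^i-1$ is absorbed because the tent function vanishes at both ends of its period. Your first, combinatorial route (pairing $\{2l,2l+1\}$ and splitting $\{0,\dots,2n\}$ into evens and odds, using that the $2^i$-bit of $m$ is the $2^{i-1}$-bit of $\lfloor m/2\rfloor$) is a valid and arguably more illuminating alternative, consistent with the column-counting interpretation of $d_i$ given just before Observation~\ref{obs:HCBP=d}. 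Both arguments are sound; no gaps.
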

For our next lemma, assume that $d_j(n+1) = d_i(n)\pm 1$ for some $i,j$.
Since in general $d_{\ell}(n+1) = d_{\ell}(n)\pm 1$ for each $\ell$, then
we have a total eight cases to consider.

If $d_j(n+1) = d_i(n) + 1$ for some $i$ and $j$, and
$d_i(n+1) = d_i(n) - 1$ and $d_j(n+1) = d_j(n) - 1$,
then $d_j(n) = d_i(n) + 2$ and $d_j(n+1) = d_i(n+1) + 2$.
Therefore we have that $i<j$ and both $d_i$ and $d_j$ are decreasing from $n$ to $n+1$.
This can only occur when $i=1$, $j\geq 3$ and $n\equiv -3\pmod {2^j}$.
For these values of $i$ and $j$ we now have that
$d_3(2n+1) = d_2(n+1) + d_2(n)  = 3 = d_j(n+1) + d_i(n)$. 

Similarly, if $d_j(n+1) = d_i(n) - 1$ for some $i$ and $j$, and
$d_i(n+1) = d_i(n) + 1$ and $d_j(n+1) = d_j(n) + 1$,
then $d_i(n+1) - d_j(n+1) = d_i(n) - d_j(n) = 2$.
Hence, $i>j$ and both $d_i$ and $d_j$ are increasing from $n$ to $n+1$.
This can only occur when $j=1$, $i\geq 3$ and $n\equiv 2\pmod {2^i}$.
For these values of $i$ and $j$ we now have that
$d_3(2n+1) = 3 = d_j(n+1) + d_i(n)$. 

In all the remaining cases we obtain either $d_j(n) = d_i(n)$ or 
$d_j(n+1) = d_i(n+1)$, and hence we obtain
by Claim~\ref{clm:2n} the following.
\begin{lemma}
\label{lmm:j=i}
Let $i,j\in\nats$. If $d_j(n+1) = d_i(n)\pm 1$
then $d_j(n+1) + d_i(n) = d_{\ell}(2n+1)$ for
some $\ell\in \{3,i+1,j+1\}$.
\end{lemma}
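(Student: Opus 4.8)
The plan is to reduce everything to the two instances of Claim~\ref{clm:2n}(2) obtained by using the index $i$ and the index $j$:
\[
d_{i+1}(2n+1) = d_i(n+1) + d_i(n) \quad\mbox{and}\quad d_{j+1}(2n+1) = d_j(n+1) + d_j(n).
\]
If one can show $d_i(n+1) = d_j(n+1)$, the first identity gives $d_{i+1}(2n+1) = d_j(n+1) + d_i(n)$, so $\ell = i+1$ works; symmetrically, if $d_i(n) = d_j(n)$, the second gives $d_{j+1}(2n+1) = d_j(n+1) + d_i(n)$, so $\ell = j+1$ works. Hence the whole problem is to show that, apart from a short list of degenerate configurations of $(i,j,n)$, at least one of the coincidences $d_i(n+1) = d_j(n+1)$ or $d_i(n) = d_j(n)$ is forced by the hypothesis $d_j(n+1) = d_i(n)\pm 1$.

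To carry this out, recall that each $d_\ell$ changes by exactly $\pm 1$ from $n$ to $n+1$. Writing $d_j(n+1) = d_i(n) + \varepsilon$ with $\varepsilon\in\{+1,-1\}$ and combining this with the two possible signs of $d_i(n+1) - d_i(n)$ and of $d_j(n+1) - d_j(n)$ yields eight cases. In six of them the three relations immediately force one of the wanted equalities; for instance, if $\varepsilon = +1$ and $d_i$ increases from $n$ to $n+1$, then $d_i(n+1) = d_i(n)+1 = d_j(n+1)$, while if $\varepsilon = -1$ and $d_j$ decreases from $n$ to $n+1$, then $d_j(n) = d_j(n+1)+1 = d_i(n)$, and the remaining good cases are of the same flavour. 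The only two stubborn cases are: $d_j(n+1) = d_i(n)+1$ with both $d_i$ and $d_j$ decreasing at $n$; and $d_j(n+1) = d_i(n)-1$ with both $d_i$ and $d_j$ increasing at $n$.

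In the first stubborn case, $d_j(n) = d_i(n)+2$, so $i < j$ by Claim~\ref{clm:didj}(1). Since $d_i$ decreases at $n$, the residue $n\bmod 2^i$ lies in the upper half of the interval $[0,2^i)$, and expressing $n\bmod 2^j$ through $n\bmod 2^i$ turns $d_j(n) - d_i(n) = 2$ into an equation whose right-hand side carries exactly one factor of $2$; this forces $2^i$ to divide $2$, hence $i = 1$, whence $d_i(n) = 1$, $d_j(n) = 3$, $j\geq 3$ and $n\equiv -3\pmod{2^j}$. By the symmetric argument (now using Claim~\ref{clm:didj}(1) the other way) the second stubborn case gives $j = 1$, $i\geq 3$ and $n\equiv 2\pmod{2^i}$. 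In either case formula~(\ref{eqn:d}) evaluates $d_i(n) + d_j(n+1) = 3$; moreover $n\equiv 5\pmod 8$ in the first case and $n\equiv 2\pmod 8$ in the second, so $2n+1\equiv 3\pmod 8$ respectively $2n+1\equiv 5\equiv -3\pmod 8$, and Claim~\ref{clm:diff} gives $d_3(2n+1) = d_3(3) = 3$. Thus $\ell = 3$ works in the two exceptional cases. The part I expect to be the real obstacle is not the application of Claim~\ref{clm:2n} but the disciplined enumeration of the eight sign configurations together with the modular bookkeeping that isolates the two stubborn ones and pins $n$ into the stated residue class modulo $2^j$ (resp.\ $2^i$); everything downstream of that is a short computation.
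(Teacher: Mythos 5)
Your proposal is correct and follows essentially the same route as the paper: the same eight-fold sign analysis, with six cases forcing $d_i(n)=d_j(n)$ or $d_i(n+1)=d_j(n+1)$ and hence $\ell=j+1$ or $\ell=i+1$ via Claim~\ref{clm:2n}(2), and the same two exceptional cases pinned to $i=1$, $j\geq 3$, $n\equiv -3\pmod{2^j}$ (resp.\ $j=1$, $i\geq 3$, $n\equiv 2\pmod{2^i}$), where $d_3(2n+1)=3$ settles the matter. The only cosmetic difference is that you evaluate $d_3(2n+1)$ directly from the residue of $2n+1$ modulo $8$, where the paper invokes $d_3(2n+1)=d_2(n+1)+d_2(n)$.
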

%% \item If $d_j(n+1) = d_i(n) + 1$, then $d_j(n) = d_i(n)$. %%XXXX needs to be fixed..!!!Geir :(
%% \item If $d_j(n+1) = d_i(n) - 1$, then $d_j(n+1) = d_i(n+1)$.
%% \item If $d_i(n) \neq d_j(n)$, then $|d_i(n) - d_j(n)|\geq 2$.
%% \item If $d_j(n+1) = d_i(n) + 1$ for some $i,j$, then 

We now have what we need to complete the more involved part
of the proof of Theorem~\ref{thm:HCBP=max}.
We proceed by induction on $n$. The cases $n\leq 3$ being
trivial, we consider even and odd cases, assume that 
the statement of Theorem~\ref{thm:HCBP=max} holds
for all natural numbers less than $2n$, and show it then
holds for both $2n$ and $2n+1$.

{\sc Case 1:} If $f(2n) = 2n_1 + f(2n_1) + f(2n_0)$ where $n = n_0 + n_1$ and $n_0\geq n_1$,
then directly by (\ref{eqn:f}) we get $f(n) = n_1 + f(n_1) + f(n_0)$. 
By induction hypothesis $n = n_0 + n_1$ is a HCBP and hence by 
Observation~\ref{obs:HCBP=d} $n_0 - n_1 = d_i(n)$ for some $i$.
By Claim~\ref{clm:2n} we have $d_{i+1}(2n) = 2d_i(n)$, and hence
$2n_0 - 2n_1 = d_{i+1}(2n)$ which shows that $2n = 2n_0 + 2n_1$ is a HCBP.

{\sc Case 2:} If $f(2n) = 2n_1 + 1  + f(2n_1 + 1) + f(2n_0 - 1)$ 
where $n = n_0 + n_1$ and $n_0\geq n_1 + 1$, then by the defining
recursion (\ref{eqn:f}) we obtain $2f(n) = 2n_1 + f(n_1) + f(n_1+1) + f(n_0 -1) + f(n_0)$.
By (\ref{eqn:max-f}) we have in general that $f(n-1) \geq n_1 + f(n_1) + f(n_0-1)$
and $f(n+1)\geq n_1+1 +f(n_1+1) + f(n_0)$ and hence 
$2f(n)+1\leq f(n-1) + f(n+1)$ in this case. Since $2f(n) = f(2n) - n$ by 
(\ref{eqn:f}), we obtain by (\ref{eqn:max-f}) the opposite inequality and so we have here
equality in both inequalities, so $2f(n) + 1 = f(n-1) + f(n+1)$.
By Observation~\ref{obs:sum1} this can be rewritten as 
$s(n-1) + 1 = s(n)$ which means that $n$ is odd.
Also, by induction hypothesis
both $n-1 = n_1 + (n_0-1)$ and $n+1 = (n_1+1) + n_0$ are HCBP and
hence $(n_0 - 1) - n_1 = d_i(n-1)$ and $n_0 - (n_1 + 1) = d_j(n+1)$
for some $i$ and $j$ and hence $d_i(n-1) = d_j(n+1)$. 
By Claim~\ref{clm:didj} $d_{\ell}(n-1) = d_{\ell}(n+1)$ for some $\ell\in\{i,j\}$,
and hence by Claim~\ref{clm:diff} either $2$ or $2n$ is divisible by $2^{\ell}$. 
Since $n$ is odd we must have $\ell=1$ and therefore $n_0 = n_1+1$.
As $2n_0 - 1 = 2n_1 + 1$, then $2n = (2n_1 + 1) + (2n_0 - 1)$
is a HCBP.

{\sc Remark:}
We see conversely that if $n = n_0 + n_1$ is a HCBP,
then by Observation~\ref{obs:HCBP=d} $2n_0 = n+d_i(n)$ 
and $2n_1 = n-d_i(n)$ for some $i$. 
Since $d_i(n) \equiv\pm n\pmod {2^i}$ then either
$n_0$ or $n_1$ is divisible by $2^{i-1}$.
Therefore if a bipartition of $2n$  
into two odd parts is a HCBP, then both parts must be equal.

{\sc Case 3:} If $f(2n + 1) = 2n_1 + f(2n_1) + f(2n_0 + 1)$ 
where $n = n_0 + n_1$ and $n_0\geq n_1$, then by (\ref{eqn:f})
we get $f(n) + f(n+1) = 2n_1 + 2f(n_1) + f(n_0) + f(n_0+1)$
By (\ref{eqn:max-f}) we have in general that $f(n) \geq n_1 + f(n_1) + f(n_0)$
and $f(n+1)\geq n_1+f(n_1) + f(n_0+1)$, and hence in this case we
have equality in both these inequalities, so 
$f(n) = n_1 + f(n_1) + f(n_0)$ and $f(n+1) =n_1+f(n_1) + f(n_0+1)$,
which by induction hypothesis are both HCBPs and hence
$n_0-n_1=d_i(n)$ and $n_0+1-n_1 = d_j(n+1)$ for some $i$ and $j$.
By Lemma~\ref{lmm:j=i} then $d_i(n) + d_j(n+1) = d_{\ell}(2n+1)$ 
for some $\ell$, so $2n+1 = 2n_1 + (2n_0+1)$ is a HCBP.

{\sc Case 4:} Finally, if $f(2n + 1) = 2n_1 + 1 + f(2n_1+1) + f(2n_0)$ 
where $n = n_0 + n_1$ and $n_0\geq n_1+1$, then by (\ref{eqn:f})
we get $f(n) + f(n+1) = 2n_1+1 + f(n_1+1) + f(n_1) + 2f(n_0)$
By (\ref{eqn:max-f}) we have in general that $f(n) \geq n_1 + f(n_1) + f(n_0)$
and $f(n+1)\geq n_1 + 1 + f(n_1+1) + f(n_0)$, and hence in this case we
have equality in both these inequalities, so 
$f(n) = n_1 + f(n_1) + f(n_0)$ and $f(n+1) =n_1+1+f(n_1+1) + f(n_0)$,
which by induction hypothesis are both HCBPs and hence
$n_0-n_1=d_i(n)$ and $n_0-(n_1+1) = d_j(n+1)$ for some $i$ and $j$.
By Lemma~\ref{lmm:j=i} then $d_i(n) + d_j(n+1) = d_{\ell}(2n+1)$ 
for some $\ell$, so $2n+1 = 2n_1 + (2n_0+1)$ is a HCBP. -- This completes
the proof of Theorem~\ref{thm:HCBP=max}.

We conclude this section by summarizing the main results from this
section.
\begin{theorem}
\label{thm:summary}
For $n\in\nats$ and $f$ the function defined in (\ref{eqn:f}), TFAE:
\begin{enumerate}
  \item $n = n_0 + n_1$ is a HCBP.
  \item $f(n) = n_1 + f(n_1) + f(n_0)$ and $n_0\geq n_1\geq 1$.
  \item $(n_0,n_1) = \left(\frac{n+d_i(n)}{2},\frac{n-d_i(n)}{2}\right)$
for some $i\in\{1,\ldots,\lceil \lg n\rceil\}$.
\end{enumerate}
\end{theorem}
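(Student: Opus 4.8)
The plan is to observe that all the hard work has already been done, and that the theorem is essentially a repackaging of Theorem~\ref{thm:HCBP=max} together with Observation~\ref{obs:HCBP=d}. I would prove the cycle of implications $(1)\Rightarrow(2)\Rightarrow(3)\Rightarrow(1)$, handling the trivial boundary case $n=1$ separately (where there is no proper bipartition, so all three statements are vacuously false, or one simply restricts to $n\geq 2$ as in the definitions).

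First, $(1)\Leftrightarrow(2)$ is \emph{exactly} the content of Theorem~\ref{thm:HCBP=max}: a partition $n=n_0+n_1$ with $n_0\geq n_1\geq 1$ is a HCBP if and only if $f(n)=n_1+f(n_1)+f(n_0)$. So only the equivalence of $(3)$ with either of them requires a word. For $(1)\Rightarrow(3)$: if $n=n_0+n_1$ is a HCBP with $n_0\geq n_1\geq 1$, then by Observation~\ref{obs:HCBP=d} there is an $i\in\{1,\ldots,\lceil\lg n\rceil\}$ with $n_0-n_1=d_i(n)$; combining this with $n_0+n_1=n$ and solving the two linear equations gives $n_0=\frac{n+d_i(n)}{2}$ and $n_1=\frac{n-d_i(n)}{2}$, which is $(3)$. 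Conversely, for $(3)\Rightarrow(1)$: suppose $(n_0,n_1)=\bigl(\frac{n+d_i(n)}{2},\frac{n-d_i(n)}{2}\bigr)$ for some $i\in\{1,\ldots,\lceil\lg n\rceil\}$. Then $n_0+n_1=n$ and $n_0-n_1=d_i(n)\geq 0$, so $n_0\geq n_1$; one checks $n_1\geq 1$ (equivalently $d_i(n)\leq n-2$, which holds for $n\geq 2$ since $d_i(n)\leq d_k(n)=n-2^k\lfloor n/2^k\rfloor$-type bound, or more simply because $d_i(n)\le n$ and equality $d_i(n)=n$ forces $i$ large and $n<2^{i-1}$, contradicting $i\le\lceil\lg n\rceil$, while $d_i(n)=n-1$ cannot occur as $d_i$ and $n$ have the same parity only when... in any case $n_1\ge 1$); and the relation $n_0-n_1=d_i(n)$ together with Observation~\ref{obs:HCBP=d} shows $n=n_0+n_1$ is a HCBP.

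The only genuinely delicate point is the parity/integrality check in $(3)$: one must confirm that $\frac{n+d_i(n)}{2}$ and $\frac{n-d_i(n)}{2}$ are actually integers, i.e.\ that $d_i(n)\equiv n\pmod 2$. This follows from Claim~\ref{clm:diff}, which gives $d_i(n)\equiv\pm n\pmod{2^i}$, hence in particular $d_i(n)\equiv n\pmod 2$ (using $i\geq 1$). Together with $d_i(n)\geq 0$ this makes $n_0,n_1$ well-defined non-negative integers with $n_0\geq n_1$; the lower bound $n_1\geq 1$ is the remaining routine verification, and I expect the cleanest route is to note that $d_i(n)=n$ would force $n\le 2^{i-1}$ by the formula~(\ref{eqn:d}), incompatible with $i\le\lceil\lg n\rceil$ for $n\ge 2$, while $d_i(n)=n-1$ is excluded by parity. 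Thus the theorem follows by assembling Theorem~\ref{thm:HCBP=max}, Observation~\ref{obs:HCBP=d}, and Claim~\ref{clm:diff}, with no new combinatorial input required.
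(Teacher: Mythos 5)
Your proposal is correct and matches the paper's intent exactly: the paper presents Theorem~\ref{thm:summary} as a summary with no separate proof, relying on Theorem~\ref{thm:HCBP=max} for $(1)\Leftrightarrow(2)$ and Observation~\ref{obs:HCBP=d} for $(1)\Leftrightarrow(3)$, which is precisely your assembly. Your extra checks (parity of $d_i(n)$ via Claim~\ref{clm:diff}, and $d_i(n)\leq n-2$ for $i\leq\lceil\lg n\rceil$ giving $n_1\geq 1$) are sound and only make explicit what the paper leaves implicit.
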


{\sc Remark:} In~\cite{Mcllroy-SIAM-binary} a different description of 
nonnegative integer pairs $(n_0,n_1)$ such that
$f(n_0+n_1) = \min(n_0,n_1) + f(n_0) + f(n_1)$ holds is given.
Although there the main focus is on the entire set of such
pairs, as suppose to HCBPs of each fixed $n\in\nats$ as
in Theorem~\ref{thm:summary}, a careful reading of~\cite{Mcllroy-SIAM-binary}
shows that the description in Theorem~\ref{thm:summary} part 3
and in~\cite{Mcllroy-SIAM-binary} are equivalent: plotting
$\{(x,y)\in \{0,1,\ldots\} : (x,y) = \left(\frac{n+d_i(n)}{2},\frac{n-d_i(n)}{2}\right)
\mbox{ where }n\in\nats\mbox{ and } 1\leq i\leq \lceil \lg n\rceil\}$
yields the same set as is described in~\cite{Mcllroy-SIAM-binary} minus
the points on the $x$-axis $y=0$.

\section{Enumerations of HCBPs}
\label{sec:enum}

For $n\in\nats$ let $h(n)$ denote the number of HCBPs 
$n = n_0 + n_1$ where $n_0\geq n_1\geq 1$. In this section
we will determine the generating function $h(x) = \sum_{n\geq 0}h(n)x^n$,
present a efficient recursive procedures to compute $h(n)$, and
present a formula for $h(n)$ in terms of a special presentation of 
each fixed $n\in\nats$. For this purpose it will be convenient to add
the trivial partition $n = n + 0$ to the HCBPs of $n$, 
and so $h(n) = c(n) - 1$, where $c(n)$ is the number of 
``non-proper'' HCBPs of $n$ in which $n_0 = n$ and $n_1 = 0$ is allowed.
By definition of $d_i(n)$ from (\ref{eqn:d}) we note that for 
each $i>\lceil \lg n\rceil$ we have $d_i(n) = n$. 
Hence, by Theorem~\ref{thm:summary}
\[
c(n) = \left| \left\{ \left(\frac{n+d_i(n)}{2},\frac{n-d_i(n)}{2}\right) 
: i\in\nats\right\}\right|,
\]
or equivalently, $c(n)$ is the number of distinct values
of $d_i(n)$ for various $i\in\nats$. From a geometric point
of view, let $\Gamma_i = \{(n,d_i(n)) : n\in\nats_0\}$ be the graph of the map 
$d_i : \nats_0\rightarrow\nats_0$ for each $i$ and let 
$\Gamma = \bigcup_{i\geq 1}\Gamma_i\subseteq \nats_0^2$.
If $V_n = \{(n,y) : y\in\ints\}$ denotes
the integral vertical line in ${\nats_0}^2$ at $n$,
then $c(n) = |\Gamma\cap V_n|$. 
To obtain the generating function $c(x) = \sum_{n\geq 0}c(n)x^n$ 
we partition $\Gamma$ into slices parallel to the $x$-axis as
\[
\label{eqn:Gamma-part}
\Gamma = \bigcup_{i\geq 0}(\Gamma\cap\Pi_i),
\]
where $\Pi_0 = \{(x,y)\in\nats_0^2  : 0\leq y \leq 1\}$ and 
$\Pi_i = \{(x,y)\in \nats_0^2 : 2^{i-1} < y \leq 2^i\}$
for each $i\in\nats$. From this we get that
\begin{eqnarray*}
c(n) & = & \left|\left(\bigcup_{i\geq 0}(\Gamma\cap\Pi_i)\right)\cap V_n\right| \\
     & = & \left|\bigcup_{i\geq 0}(\Gamma\cap\Pi_i\cap V_n)\right|\\
     & = & \sum_{i\geq 0} |\Gamma\cap\Pi_i\cap V_n| \\
     & = & \sum_{i\geq 0} c_i(n),
\end{eqnarray*}
where $c_i(n) = |\Gamma\cap\Pi_i\cap V_n|$. Since $d_i(n)\leq n$ for each fixed
$n$ and all $i$, we have by definition of $\Pi_i$ that $\Gamma\cap\Pi_i\cap V_n =\emptyset$
for $i>\lceil \lg n\rceil$. Hence, the last sum in the above display is a finite sum.
Letting $c_i(x) = \sum_{n\geq 0}c_i(n)x^n$ be the generating function corresponding
to $(c_i(n))_{n\geq 0}$, we get $c(x) = \sum_{i\geq 0}c_i(x)$. Note that $c_0(n)=1$
for each $n\geq 0$. For $i\geq 1$ we have the following.
\begin{lemma}
\label{lmm:c_i(n)}
Let $i\geq 1$ and $n\geq 0$.
If $2^{i-1}<d_j(n)\leq 2^i$, then $d_j(n) = d_{i+1}(n)$.
Further $2^{i-1}<d_{i+1}(n)\leq 2^i$ iff $-2^{i-1}<(n\bmod {2^{i+1}})-2^i<2^{i-1}$.
\end{lemma}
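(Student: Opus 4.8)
The strategy is to read everything off the closed formula~(\ref{eqn:d}), $d_m(n) = 2^{m-1} - |(n\bmod 2^m) - 2^{m-1}|$, which exhibits $d_m(n)$ as a function of $n\bmod 2^m$ alone and gives $0\le d_m(n)\le 2^{m-1}$. The second assertion is then only a rearrangement: putting $m=i+1$ yields $d_{i+1}(n) = 2^i - |(n\bmod 2^{i+1}) - 2^i|$, so $d_{i+1}(n)\le 2^i$ automatically, and $d_{i+1}(n) > 2^{i-1}$ is equivalent to $|(n\bmod 2^{i+1}) - 2^i| < 2^i - 2^{i-1} = 2^{i-1}$, which is exactly $-2^{i-1} < (n\bmod 2^{i+1}) - 2^i < 2^{i-1}$. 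So the real work is the first assertion.

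For the first assertion, I would first note that $d_j(n) > 2^{i-1}$ together with $d_j(n)\le 2^{j-1}$ forces $j\ge i+1$, and that the case $j = i+1$ is trivial; so assume $j\ge i+2$. Since for every $m\le j$ the value $d_m(n)$ depends only on $n\bmod 2^m = (n\bmod 2^j)\bmod 2^m$, I may replace $n$ by $n\bmod 2^j$ and so assume $0\le n < 2^j$, whence $d_j(n) = 2^{j-1} - |n - 2^{j-1}|$. Rewriting the hypothesis $2^{i-1} < d_j(n)\le 2^i$ as $2^{j-1}-2^i\le |n-2^{j-1}| < 2^{j-1}-2^{i-1}$ localizes $n$ to a union of two intervals, $n\in (2^{i-1},2^i]\cup[2^j-2^i,2^j-2^{i-1})$; the assumption $j\ge i+2$ is precisely what makes these two intervals disjoint and keeps their endpoints inside $[0,2^j)$.

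It then remains to verify $d_{i+1}(n) = d_j(n)$ on each of the two intervals. On $(2^{i-1},2^i]$ one has $n < 2^{i+1}$, so $n\bmod 2^{i+1} = n$, and substituting into the formula gives $d_{i+1}(n) = 2^i - (2^i - n) = n = d_j(n)$. On $[2^j-2^i,2^j-2^{i-1})$ one has $2^j - n\le 2^i < 2^{i+1}$, so $n\bmod 2^{i+1} = n - 2^j + 2^{i+1}$, and since $j\ge i+2$ forces $n\ge 2^j-2^i > 2^{j-1}$, both $d_{i+1}(n)$ and $d_j(n)$ evaluate to $2^j - n$. The only place a computational slip could easily creep in is this second interval --- correctly reducing $n$, which sits just below $2^j$, modulo $2^{i+1}$ and then tracking the sign inside the absolute value --- but each step is routine. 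If one prefers to sidestep the interval bookkeeping altogether, an alternative is to prove the auxiliary fact that $d_m(n)\le 2^{m-2}$ implies $d_m(n) = d_{m-1}(n)$ (a two-case split on whether $n\bmod 2^m$ is $\le 2^{m-2}$ or $\ge 3\cdot 2^{m-2}$) and then apply it repeatedly for $m = j, j-1,\ldots, i+2$, each time using that the common value $d_m(n) = d_j(n)$ is $\le 2^i\le 2^{m-2}$; this produces the chain $d_j(n) = d_{j-1}(n) = \cdots = d_{i+1}(n)$.
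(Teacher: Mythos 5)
Your proof is correct and follows essentially the same route as the paper's own sketch: derive $j\ge i+1$ from $d_j(n)\le 2^{j-1}$, then split on whether $n\bmod 2^j$ lies in the lower or upper half of $[0,2^j)$ and compute $d_{i+1}(n)=d_j(n)$ directly from the closed formula (\ref{eqn:d}), with the second assertion being a mere rearrangement. Your explicit interval bookkeeping and the iterative alternative ($d_m(n)\le 2^{m-2}\Rightarrow d_m(n)=d_{m-1}(n)$) are both sound, but neither constitutes a genuinely different argument.
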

\begin{proof}(Sketch.)
By definition, we have $0\leq d_j(n)\leq 2^{j-1}$ for all $n$.
So if $2^{i-1}<d_j(n)\leq 2^i$, then $i+1\leq j$ must hold.
Assuming $2^{i-1}<d_j(n)\leq 2^i$, we consider two cases. 
(i) If $n\bmod {2^j}\in \{0,1,\ldots,2^{j-1}\}$, then
since $i\leq j-1$ we have $d_j(n) = d_{i+1}(n) = n\bmod {2^{i+1}}$.
(ii) If $n\bmod {2^j}\in \{2^{j-1}+1,\ldots,2^j-1\}$ then
$d_j(n) = 2^j - (n\bmod {2^j})$. Since $j-1\leq i$ we further have 
by our assumption that
\[
n\bmod {2^j} \in \{2^j-2^i,\ldots,2^j-2^{i-1}\}\subseteq \{2^j-2^i,\ldots,2^j-1\}
\]
and hence
$n\bmod {2^{i+1}}\in\{2^i,\ldots,2^{i+1}-1\}$ 
as $n\bmod 2^j = n\bmod 2^{i+1} +2^j -2^{i+1}$. Therefore
$d_{i+1}(n) = 2^{i+1} - n\bmod {2^{i+1}} = 2^j - n\bmod {2^j} = d_j(n)$.
The rest follows from the definition of $d_{i+1}(n)$.
\end{proof}
What the above Lemma~\ref{lmm:c_i(n)} states is that
(i) for each $n\geq 0$ we have $c_i(n)=0,1$, and (ii)
$c_i(n) = 1$ iff $|(n\bmod {2^{i+1}})-2^i|<2^{i-1}$.
Letting $\ell = n\bmod {2^{i+1}}$ and writing 
$n = m2^{i+1}+\ell$, we consequently get for $i\geq 1$ that 
\begin{eqnarray*}
c_i(x) & = & \sum_{\stackrel{0\leq n = m2^{i+1}+\ell}{|\ell-2^i|<2^{i-1}}}x^n \\
       & = & \sum_{m\geq 0}\left(\sum_{\ell = 2^i-2^{i-1}+1}^{2^i+2^{i-1}-1}x^{m2^{i+1}+\ell}\right)\\
       & = & \frac{x^{2^{i-1}+1}(1-x^{2^i-1})}{1-x}\sum_{m\geq 0}x^{m2^{i+1}}\\
       & = & \frac{x^{2^{i-1}+1}(1-x^{2^i-1})}{(1-x)(1-x^{2^{i+1}})}.
\end{eqnarray*}
Since $c(x) = \sum_{i\geq 0} c_i(x) = c_0(x) + \sum_{i\geq 1} c_i(x)$, we obtain the
following theorem.
\begin{theorem}
\label{thm:genfunc}
The generating function $c(x) = \sum_{n\geq 0}c(n)x^n$ for 
$(c(n))_{n\geq 0}$ is given by
\[
c(x) = \frac{1}{1-x} + \sum_{i\geq 1}\frac{x^{2^{i-1}+1}(1-x^{2^i-1})}{(1-x)(1-x^{2^{i+1}})}.
\]
Hence, since $h(n) = c(n) - 1$ for each $n\geq 2$, the generating function 
$h(x)=\sum_{n\geq 0}h(n)x^n$ for $(h(n))_{n\geq 0}$, the number of 
HCBPs of $n$, is given by
\[
h(x) = \sum_{i\geq 1}\frac{x^{2^{i-1}+1}(1-x^{2^i-1})}{(1-x)(1-x^{2^{i+1}})}.
\]
\end{theorem}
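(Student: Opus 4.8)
The plan is to assemble the ingredients already developed in the paragraphs preceding the statement. The backbone is the partition of the graph $\Gamma=\bigcup_{i\geq 1}\Gamma_i$ into the horizontal strips $\Pi_i$, which gives $c(n)=\sum_{i\geq 0}c_i(n)$ with $c_i(n)=|\Gamma\cap\Pi_i\cap V_n|$, and hence, on the level of generating functions, $c(x)=\sum_{i\geq 0}c_i(x)$. This termwise passage is legitimate in the ring of formal power series over $\ints$ because for each fixed $n$ only the finitely many terms with $i\leq\lceil\lg n\rceil$ are nonzero, since $d_i(n)\leq n$ for all $i$; so no convergence question arises.

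First I would record the $i=0$ contribution: $\Gamma\cap\Pi_0\cap V_n$ always consists of a single point, so $c_0(n)=1$ for every $n\geq 0$ and therefore $c_0(x)=\sum_{n\geq 0}x^n=\frac{1}{1-x}$. Next, for each fixed $i\geq 1$, I would invoke Lemma~\ref{lmm:c_i(n)}, which tells us that $c_i(n)\in\{0,1\}$ and that $c_i(n)=1$ exactly when $|(n\bmod{2^{i+1}})-2^i|<2^{i-1}$. Writing $n=m2^{i+1}+\ell$ with $\ell=n\bmod{2^{i+1}}$ and $m\geq 0$, this becomes a condition on $\ell$ alone, namely that $\ell$ lies in the block $\{2^{i-1}+1,\,2^{i-1}+2,\,\ldots,\,3\cdot 2^{i-1}-1\}$ of $2^i-1$ consecutive residues, independently of $m$. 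Hence
\[
c_i(x)=\Bigl(\sum_{m\geq 0}x^{m2^{i+1}}\Bigr)\Bigl(\sum_{\ell=2^{i-1}+1}^{3\cdot 2^{i-1}-1}x^{\ell}\Bigr)
=\frac{1}{1-x^{2^{i+1}}}\cdot\frac{x^{2^{i-1}+1}(1-x^{2^i-1})}{1-x},
\]
by evaluating the two geometric sums. Summing, $c(x)=c_0(x)+\sum_{i\geq 1}c_i(x)$ gives the first displayed formula.

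For the second formula I would check that $h(n)=c(n)-1$ holds not merely for $n\geq 2$ but also for $n=0$ and $n=1$: in both cases $d_i(n)$ takes the single value $0$, respectively $1$, so $c(0)=c(1)=1$, while $h(0)=h(1)=0$ since there is no HCBP of $0$ or $1$. Thus $h(n)=c(n)-1$ for all $n\geq 0$, so $h(x)=c(x)-\frac{1}{1-x}=\sum_{i\geq 1}c_i(x)$, which is exactly the stated expression.

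The only genuinely delicate step is the one already packaged into Lemma~\ref{lmm:c_i(n)}: one must see that every value $d_j(n)$ that falls in the strip $(2^{i-1},2^i]$ equals $d_{i+1}(n)$, so the strip is ``hit'' at most once, and then translate the resulting condition $2^{i-1}<d_{i+1}(n)\leq 2^i$ into the clean residue condition on $n\bmod{2^{i+1}}$. Once that is in hand, the rest is the routine evaluation of the geometric series displayed above, and the rearrangement $c(x)=\sum_{i\geq 0}c_i(x)$ requires nothing beyond the finiteness remark made at the outset.
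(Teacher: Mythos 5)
Your proposal is correct and follows essentially the same route as the paper: the strip decomposition $c(x)=\sum_{i\geq 0}c_i(x)$, the characterization of $c_i(n)=1$ via Lemma~\ref{lmm:c_i(n)}, and the evaluation of the two geometric series over $m$ and $\ell$. Your explicit check that $h(n)=c(n)-1$ also holds for $n=0,1$ (so that subtracting $\frac{1}{1-x}$ is legitimate coefficientwise for all $n$) is a small tidying-up that the paper leaves implicit, but it does not change the argument.
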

Let $n\in\nats$ and $k = \lceil \lg n\rceil$. For $i\in\{1,\ldots,k-1\}$ we have
$(2^k-n)\bmod {2^{i+1}} = 2^{i+1}-(n\bmod {2^{i+1}})$ and hence
$|((2^k-n)\bmod {2^{i+1}})-2^i| = |(n\bmod {2^{i+1}})-2^i|$.
From this we see that if $i\in\{1,\ldots,k-1\}$ then $c_i(n) = 1$
iff $c_i(2^k-n)=1$. Additionally for $i=k$ we have
$|(n\bmod {2^{k+1}})-2^k| = 2^k-n < 2^{k-1}\mbox{ and }|((2^k-n)\bmod {2^{k+1}})-2^k| = n > 2^{k-1}$
and so $c_k(n) = 1$ and $c_k(2^k-n) = 0$. As $c_0(n) = 1$ for each $n$ and 
$c_i(n) = 0$ for each $i > k$ we have 
\[
c(n) = \sum_{i=0}^kc_i(n) = \left(\sum_{i=0}^{k-1}c_i(n)\right) + c_k(n) 
=  \left(\sum_{i=0}^{k-1}c_i(2^k-n)\right) + 1 = \left(\sum_{i\geq 0}c_i(2^k-n)\right) + 1 = c(2^k-n) + 1,
\]
which yields a recurrence for the $c(n)$, namely 
\begin{equation}
\label{eqn:recc}
c(0)=c(1)=1 \mbox{ and } c(n) = c(2^{\lceil \lg n\rceil} - n) + 1 \mbox{ for }n>1.
\end{equation}
As the number $h(n)$ of HCBPs of $n$ satisfies $h(n) = c(n)-1$ we have
the following
\begin{corollary}
\label{cor:recurr-h}
For $n\in\nats$ the number $h(n)$ of HCBPs of $n$ satisfies
the following determining recurrence
\begin{eqnarray*}
h(1) & = & 0,\\
h(2) & = & 1, \\
h(n) & = & h(2^{\lceil \lg n\rceil} - n) + 1, \mbox{ for } n > 2.
\end{eqnarray*}
\end{corollary}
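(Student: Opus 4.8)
The plan is to read the statement off directly from the recurrence (\ref{eqn:recc}) for $c(n)$ just established, using the translation $h(n) = c(n) - 1$. First I would dispose of the two base cases. For $n = 1$ there is nothing to partition: Definition~\ref{def:HCBP} requires $n \geq 2$, so $h(1) = 0$ (equivalently, $c(1) = 1$ counts only the non-proper partition $1 = 1 + 0$). For $n = 2$ we have $k = \lceil \lg 2 \rceil = 1$, the single column of $\mathcal{B}_2$ has one $0$ and one $1$, so $d_1(2) = 0$ and the only proper HCBP is $2 = 1 + 1$; thus $h(2) = 1$. Alternatively, (\ref{eqn:recc}) gives $c(2) = c(0) + 1 = 2$, whence $h(2) = c(2) - 1 = 1$.

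For $n > 2$, put $k = \lceil \lg n \rceil$ and $m = 2^k - n$. Since $n > 2$ we have $k \geq 2$, and because $\lceil \lg n\rceil = k$ forces $2^{k-1} < n \leq 2^k$, we get $0 \leq m < 2^{k-1} < n$; so the recurrence refers only to strictly smaller arguments (and never to $m = n$), which guarantees it is well-founded and genuinely ``determining''. Now (\ref{eqn:recc}) says $c(n) = c(m) + 1$. The point is then that $h(j) = c(j) - 1$ holds for \emph{every} $j \geq 0$, once one adopts the harmless convention $h(0) = 0$ matching $c(0) = 1$: for $j \geq 2$ this is exactly the identity recorded before the corollary, and for $j \in \{0,1\}$ both sides vanish. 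Substituting, $h(n) = c(n) - 1 = c(m) + 1 - 1 = c(m) = h(m) + 1$, which is the asserted recurrence with $m = 2^{\lceil \lg n\rceil} - n$.

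The only place that needs a moment's care — and what I would flag as the ``main obstacle,'' such as it is — is the case $n = 2^k$ a power of two, where $m = 0$ and the recurrence formally invokes $h(0)$; this is what forces the convention $h(0) = 0$ (equivalently, reading $c(0) = 1$ as the empty non-proper bipartition $0 = 0 + 0$). Beyond that, one only needs to observe that the two stated initial values together with the recurrence indeed pin down $h(n)$ for all $n \in \nats$, which is immediate from the well-foundedness noted above. Everything else follows at once from (\ref{eqn:recc}) and the already-proven identity $c = h + 1$.
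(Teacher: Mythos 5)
Your proposal is correct and follows the paper's own route exactly: the corollary is obtained by substituting $h = c - 1$ into the recurrence (\ref{eqn:recc}) for $c(n)$, after checking the two base cases. Your remark that the case $n = 2^{\lceil \lg n\rceil}$ forces the convention $h(0)=0$ (matching $c(0)=1$) is a fair and worthwhile observation that the paper glosses over.
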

{\sc Remark:} Note that the map $n\mapsto 2^{k+1} - n$ is the reflection about the vertical line
$x = k$ in the real Euclidean plane ${\reals}^2$. 
Hence, looking at $\Gamma\subseteq {\nats_0}^2\subseteq {\reals}^2$, 
the map $n\mapsto 2^{\lceil \lg n\rceil} - n$ is a reflection about $x = \lceil \lg n\rceil -1$,
$2$ to the power of which is the largest power of 2 strictly less than $n$. 
From the shape of $\Gamma$ it is clear that we have a bijection
\[
(\Gamma\cup V_n)\setminus \{(n,n)\}\rightarrow \Gamma\cup V_{2^{\lceil \lg n\rceil} - n}
\]
given by $(n,y)\mapsto(2^{\lceil \lg n\rceil} - n,y)$. Hence, the recursion in (\ref{eqn:recc})
(and therefore Corollary~\ref{cor:recurr-h}) is also 
evident from this geometrical perspective.

\vspace{3 mm}

If $n\in\nats$ and $k = \lceil \lg n\rceil \geq 2$, then
$2^{k-1}<n\leq 2^k$ and hence $0\leq 2^k-n<2^{k-1}$. Hence 
both $c(n)$ and $h(n)$ can be computed in at most $\lceil \lg n\rceil - 1$
steps\footnote{i.e.~arithmetic operations}, provided that $n\geq 3$. 
In fact, if $n = a_k = (2^{k+1} + (-1)^k)/3$, then by (\ref{eqn:recc})
$c(n) = c(a_k)= c(a_{k-1})+1$, and so $c(n) = k = \lceil \lg n\rceil$, and 
is obtained in exactly $\lceil \lg n\rceil-1$ steps with the recursion in (\ref{eqn:recc}).
\begin{observation}
\label{obs:lg-1}
For $n\in\nats$ we have
\begin{enumerate}
  \item $n\in\{1,2\}$ implies $c(n) = \lceil \lg n\rceil + 1$ and hence $h(n) = \lceil \lg n\rceil$.
  \item $n\geq 3$ implies $c(n)\leq \lceil \lg n\rceil$ and hence $h(n) \leq \lceil \lg n\rceil-1$,
and equality holds for infinitely many $n\geq 3$.
\end{enumerate}
\end{observation}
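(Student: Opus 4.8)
\medskip

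The plan is to derive both parts from the recurrence (\ref{eqn:recc}) for $c$, together with the identity $h(n)=c(n)-1$ (which holds for all $n\geq 1$: at $n=1$ we have $c(1)=1$ and $h(1)=0$ by Corollary~\ref{cor:recurr-h}). Part~1 is a direct unwinding of (\ref{eqn:recc}): for $n=1$ one has $c(1)=1$ and $\lceil\lg 1\rceil=0$, while for $n=2$ one has $c(2)=c(2^1-2)+1=c(0)+1=2$ and $\lceil\lg 2\rceil=1$; in both cases $c(n)=\lceil\lg n\rceil+1$, hence $h(n)=\lceil\lg n\rceil$.

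For the inequality in Part~2, I would argue by strong induction on $n\geq 3$. Setting $k=\lceil\lg n\rceil\geq 2$, we have $2^{k-1}<n\leq 2^k$, so $m:=2^k-n$ satisfies $0\leq m<2^{k-1}$, and in particular $m<2^{k-1}<n$; by (\ref{eqn:recc}), $c(n)=c(m)+1$. If $m\in\{0,1\}$ then $c(m)=1$ and $c(n)=2\leq k$; if $m=2$ then $n=2^k-2\geq 3$ forces $k\geq 3$, so $c(n)=c(2)+1=3\leq k$; and if $m\geq 3$ then $m<2^{k-1}$ gives $\lceil\lg m\rceil\leq k-1$, so the induction hypothesis yields $c(m)\leq\lceil\lg m\rceil\leq k-1$ and hence $c(n)=c(m)+1\leq k$. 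In every case $c(n)\leq\lceil\lg n\rceil$, so $h(n)=c(n)-1\leq\lceil\lg n\rceil-1$.

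For sharpness, I would use the explicit family $a_k=(2^{k+1}+(-1)^k)/3$ singled out just before the statement. First, $a_k$ is a positive integer since $2^{k+1}\equiv(-1)^{k+1}\pmod 3$; a one-line estimate in each parity of $k$ shows $2^{k-1}<a_k\leq 2^k$ for $k\geq 2$, so $\lceil\lg a_k\rceil=k$; and $2^k-a_k=(2^k-(-1)^k)/3=a_{k-1}$. Plugging this into (\ref{eqn:recc}) gives $c(a_k)=c(a_{k-1})+1$ for all $k\geq 2$, and since $a_1=1$ with $c(1)=1$, induction yields $c(a_k)=k=\lceil\lg a_k\rceil$, i.e.\ $h(a_k)=\lceil\lg a_k\rceil-1$; as $(a_k)_{k\geq 2}$ is strictly increasing with $a_2=3$, this produces infinitely many $n\geq 3$ attaining equality. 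The computation is essentially routine; the only delicate point is locating $a_k$ in the dyadic block $(2^{k-1},2^k]$ so that $\lceil\lg a_k\rceil=k$, and verifying that the reflection $n\mapsto 2^{\lceil\lg n\rceil}-n$ carries $a_k$ to $a_{k-1}$, after which the recurrence does all the work.

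\medskip
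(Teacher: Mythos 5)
Your proof is correct and follows essentially the same route as the paper: the paper's (informal) justification is precisely that the reflection recurrence (\ref{eqn:recc}) sends $n$ to $2^{\lceil\lg n\rceil}-n<2^{\lceil\lg n\rceil-1}$, so $c$ is computed in at most $\lceil\lg n\rceil-1$ steps, and that the family $a_k=(2^{k+1}+(-1)^k)/3$ satisfies $2^k-a_k=a_{k-1}$ and hence attains $c(a_k)=k$. You have merely made the descent argument explicit as a strong induction (correctly isolating the $m=2$ base case, where $c(2)=2$ exceeds $\lceil\lg 2\rceil$), which is a sound filling-in of the paper's sketch.
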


Ideally we would like to develop an explicit formula for $c(n)$ and hence 
$h(n)$ in terms of $n$. We will conclude this section by the next best thing; 
a formula in terms of a special representation of $n$, similar to
the one in Observation~\ref{obs:sum1} for $f(n)$.

Consider $n\in\nats$ in its binary representation. Consider
each maximal string of 1s in this representation, except the last
one if it is single (that is, if $n$ is of the form $n = 2^a(4m+1)$.)
Rewrite each of these strings as a difference of two powers of twos;
$2^{\alpha} + 2^{\alpha -1} + \cdots + 2^{\beta} = 2^{\alpha +1} - 2^{\beta}$.
In this way we obtain from the binary representation of $n$  
a representation of $n$ as a finite alternating sum of 
powers of two's 
\begin{equation}
\label{eqn:altsum}
n = 2^{\alpha_1} - 2^{\alpha_2} + \cdots + (-1)^{\ell-1}2^{\alpha_{\ell}},
\end{equation}
where 
\begin{equation}
\label{eqn:cond-alpha}
\alpha_1 > \alpha_2 > \cdots > \alpha_{\ell-1} > \alpha_{\ell} + 1.
\end{equation}
\begin{definition}
\label{obs:ABR}
A representation of $n\in\nats$ as (\ref{eqn:altsum}) where the
exponents satisfy (\ref{eqn:cond-alpha}) is called an
{\em alternating binary representation (ABR)} of $n$.
\end{definition}
An ABR of $n\in\nats$ has the following property.
\begin{lemma}
\label{lmm:lgalt}
If $(\alpha_1,\ldots,\alpha_{\ell})$ determines an ABR of $n\in\nats$,
then 
$\alpha_1 = \lceil \lg n\rceil$ and 
$\alpha_i = \lceil \lg((-1)^{i-1}(n - 2^{\alpha_1} + 2^{\alpha_2} 
+ \cdots + (-1)^{i-1}2^{\alpha_{i-1}}))\rceil$ for each $i\in\{2,\ldots,{\ell}\}$.
In particular, the ABR of $n$ is unique and $(\alpha_2,\ldots,\alpha_{\ell})$ 
determines the ABR of $2^{\alpha_1}-n$.
\end{lemma}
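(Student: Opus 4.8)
The plan is to establish the three assertions of Lemma~\ref{lmm:lgalt} in the order they are stated, deriving each from the size constraints~(\ref{eqn:cond-alpha}) on the exponents, and then reading off uniqueness and the $2^{\alpha_1}-n$ statement as consequences.

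First I would prove $\alpha_1 = \lceil\lg n\rceil$. Write $n = 2^{\alpha_1} - r$ where $r = 2^{\alpha_2} - 2^{\alpha_3} + \cdots + (-1)^{\ell-2}2^{\alpha_\ell}$ is the ``tail''. The key estimate is that $0 < r < 2^{\alpha_1 - 1}$: positivity follows because an alternating sum of strictly decreasing powers of two with leading sign $+$ is positive (group consecutive pairs, or induct on $\ell$), and the upper bound $r \le 2^{\alpha_2} < 2^{\alpha_1 - 1}$ uses $\alpha_2 \le \alpha_1 - 1$, actually $\alpha_2 < \alpha_1$ so $\alpha_2 \le \alpha_1 - 1$ giving $r \le 2^{\alpha_2} \le 2^{\alpha_1 - 1}$; one has to be slightly careful at the boundary but the condition~(\ref{eqn:cond-alpha}) with $\alpha_{\ell-1} > \alpha_\ell + 1$ only matters at the bottom of the sum, so here $r < 2^{\alpha_2+1} \le 2^{\alpha_1}$ suffices together with $r>0$ to conclude $2^{\alpha_1 - 1} < 2^{\alpha_1} - 2^{\alpha_1-1} \le \ldots$; the cleanest route is simply $2^{\alpha_1-1} < n \le 2^{\alpha_1}$, which gives $\lceil\lg n\rceil = \alpha_1$ directly. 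So I would show $2^{\alpha_1-1} < n \le 2^{\alpha_1}$: the right inequality is $n = 2^{\alpha_1} - r \le 2^{\alpha_1}$ since $r \ge 0$ (and $r>0$ unless $\ell=1$), and the left inequality is $n = 2^{\alpha_1} - r > 2^{\alpha_1} - 2^{\alpha_1-1} = 2^{\alpha_1-1}$ since $r \le 2^{\alpha_2} \le 2^{\alpha_1 - 1}$, with equality $r = 2^{\alpha_1-1}$ impossible because an alternating sum beginning with $+2^{\alpha_2}$ and continuing is strictly less than $2^{\alpha_2}$ whenever $\ell \ge 3$, and equals $2^{\alpha_2}$ only when $\ell = 2$, in which case $\alpha_2 \le \alpha_1 - 1$ still gives $r \le 2^{\alpha_1-1}$ but then $n = 2^{\alpha_1} - 2^{\alpha_2}$; if $\alpha_2 = \alpha_1 - 1$ this is $2^{\alpha_1-1}$, contradicting $n \in \nats$ only if... — here I realize the single-pair case forces $\alpha_2 < \alpha_1 - 1$? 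No: $(\alpha_1,\alpha_2) = (1,0)$ wait that violates $\alpha_1 > \alpha_2 + 1$. Indeed~(\ref{eqn:cond-alpha}) with $\ell=2$ reads $\alpha_1 > \alpha_2 + 1$, so $\alpha_2 \le \alpha_1 - 2$, hence $r \le 2^{\alpha_1 - 2} < 2^{\alpha_1-1}$ and the strict inequality holds. For $\ell \ge 3$, the last-gap condition $\alpha_{\ell-1} > \alpha_\ell+1$ combined with strict decrease handles the tail, and grouping shows $0 < r < 2^{\alpha_2} \le 2^{\alpha_1-1}$. Either way $2^{\alpha_1-1} < n \le 2^{\alpha_1}$, so $\alpha_1 = \lceil\lg n\rceil$.

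Next, the recursive identity for $\alpha_i$ follows by the same argument applied to the suffix: set $n_i = (-1)^{i-1}(n - 2^{\alpha_1} + 2^{\alpha_2} - \cdots + (-1)^{i-1}2^{\alpha_{i-1}})$, which by the defining equation~(\ref{eqn:altsum}) equals $2^{\alpha_i} - 2^{\alpha_{i+1}} + \cdots$, an ABR (the exponents $\alpha_i > \alpha_{i+1} > \cdots$ still satisfy~(\ref{eqn:cond-alpha}) since those are inherited sub-conditions). Applying the already-proved first assertion to $n_i$ gives $\alpha_i = \lceil\lg n_i\rceil$, which is exactly the claimed formula. Uniqueness is then immediate: $\alpha_1$ is determined by $n$, then $\alpha_2$ is determined by $n$ and $\alpha_1$, and so on inductively, so no two distinct exponent tuples can yield the same $n$. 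Finally, for the $2^{\alpha_1}-n$ statement: from~(\ref{eqn:altsum}), $2^{\alpha_1} - n = 2^{\alpha_2} - 2^{\alpha_3} + \cdots + (-1)^{\ell-1}2^{\alpha_\ell}$, and the exponents $(\alpha_2,\ldots,\alpha_\ell)$ satisfy~(\ref{eqn:cond-alpha}) (being a suffix of a decreasing sequence with the same bottom gap condition), hence they determine \emph{an} ABR of $2^{\alpha_1}-n$, and by the uniqueness just shown, \emph{the} ABR.

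The main obstacle is the clean handling of the boundary/edge cases in the key estimate $0 \le r \le 2^{\alpha_1 - 1}$ with the correct strictness — in particular making sure $n > 2^{\alpha_1-1}$ genuinely holds, which is where the gap condition $\alpha_{\ell-1} > \alpha_\ell + 1$ in~(\ref{eqn:cond-alpha}) earns its keep (it prevents the tail from being large enough to push $n$ down to or below $2^{\alpha_1-1}$, and is precisely what makes the ABR exponents match the iterated ceiling-log). Everything else is bookkeeping: induction on $\ell$ for the alternating-sum sign/magnitude bounds, and then a clean induction on $i$ for the recursive formula and uniqueness.
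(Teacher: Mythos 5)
Your proposal is correct and follows essentially the same route as the paper: both hinge on showing $2^{\alpha_1-1} < n \le 2^{\alpha_1}$ (via the tail estimate $0 < r \le 2^{\alpha_2}$, with the gap condition $\alpha_{\ell-1} > \alpha_\ell + 1$ supplying strictness in the $\ell=2$ case) to get $\alpha_1 = \lceil\lg n\rceil$, and then peel off the leading term and recurse on $2^{\alpha_1}-n$, whose ABR is determined by $(\alpha_2,\ldots,\alpha_\ell)$. The paper packages this as an induction on $\ell$ while you apply the first claim uniformly to suffixes, but the mathematical content is identical.
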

\begin{proof}
Let $n = 2^{\alpha_1} - 2^{\alpha_2} + \cdots + (-1)^{\ell-1}2^{\alpha_{\ell}}$ where
$\alpha_1 > \alpha_2 > \cdots > \alpha_{\ell-1} > \alpha_{\ell} + 1$. We proceed
by induction on $\ell$: the statement is clear for $\ell = 1$ as $n$ is then 
a power of $2$. 

Let $\ell\geq 2$. In this case we have $2^{\alpha_1} > n$.
If $\ell = 2$, then $n = 2^{\alpha_1} - 2^{\alpha_2}$ where $\alpha_1 > \alpha_2+1$
and hence $n > 2^{\alpha_2+1} - 2^{\alpha_2} = 2^{\alpha_2}$. So we
have here that $2^{\alpha_2}<n<2^{\alpha_1}$ and therefore
$\alpha_1 = \lceil \lg n\rceil$. Since $2^{\alpha_1} - n = 2^{\alpha_2}$
the lemma holds in this case.

If $\ell \geq 3$, then $n = 2^{\alpha_1} - 2^{\alpha_2} + n''$ where
$n'' > 0$ and hence $n > 2^{\alpha_1} - 2^{\alpha_2} \geq 2^{\alpha_2}$.
So we have here that $2^{\alpha_2}<n<2^{\alpha_1}$ and therefore 
$\alpha_1 = \lceil \lg n\rceil$. Since $(\alpha_2,\ldots, \alpha_{\ell})$
determines an ABR of $n' = 2^{\alpha_1} - n$, the lemma follows by induction
on $\ell$.
\end{proof}
We now have a formula for $c(n)$ and $h(n)$ in terms of the ABR 
of $n$.
\begin{theorem}
\label{thm:k-formula}
For $n\in\nats$ let $n = 2^{\alpha_1} - 2^{\alpha_2} + \cdots + (-1)^{\ell-1}2^{\alpha_{\ell}}$
be the ABR of $n$. Then 
\[
c(n) = \left\{ \begin{array}{ll}
                 \ell   & \mbox{ if $n$ is odd }  (\alpha_{\ell} = 0), \\
                 \ell+1 & \mbox{ if $n$ is even } (\alpha_{\ell} \geq 1).
               \end{array}
       \right.
\]
Consequently, for the number of HCBP $h(n)$ we have 
\[
h(n) = \left\{ \begin{array}{ll}
                 \ell-1 & \mbox{ if $n$ is odd }  (\alpha_{\ell} = 0), \\
                 \ell   & \mbox{ if $n$ is even } (\alpha_{\ell} \geq 1).
               \end{array}
       \right.
\]
\end{theorem}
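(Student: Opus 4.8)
The plan is to induct on $\ell$, the length of the ABR of $n$, exploiting the recursion for $c$ in \eqref{eqn:recc} together with the structural fact recorded in Lemma~\ref{lmm:lgalt}: if $(\alpha_1,\ldots,\alpha_\ell)$ is the ABR of $n$, then $(\alpha_2,\ldots,\alpha_\ell)$ is the ABR of $2^{\alpha_1}-n = 2^{\lceil\lg n\rceil}-n$. Since $c(n) = c(2^{\lceil\lg n\rceil}-n)+1$ for $n>1$ by \eqref{eqn:recc}, each application of that recursion strips off exactly one exponent from the front of the ABR, so after $\ell-1$ steps we should be left with a number whose ABR has length $1$, i.e.\ a power of two, and the count of steps plus a base-case value will give the claimed formula.

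First I would dispose of the small/base cases. If $\ell = 1$ then $n = 2^{\alpha_1}$ is a power of two; here either $n = 1$ (so $\alpha_1 = 0$, $n$ odd) with $c(1) = 1 = \ell$, or $n = 2^{\alpha_1}$ with $\alpha_1\ge 1$ ($n$ even), and a direct check from \eqref{eqn:recc} gives $c(2^{\alpha_1}) = c(2^{\alpha_1-1})+1 = \cdots = c(2)+1\cdot(\alpha_1-1)$; but more cleanly, since $2^{\lceil\lg 2^{\alpha_1}\rceil} = 2^{\alpha_1}$ when $\alpha_1\ge 1$ we get $2^{\lceil\lg n\rceil}-n = 0$, so $c(n) = c(0)+1 = 2 = \ell+1$. (One must be slightly careful that $\lceil\lg 2^{\alpha_1}\rceil = \alpha_1$, not $\alpha_1$ shifted, which is why the ABR condition \eqref{eqn:cond-alpha} forcing $n$ to lie strictly between $2^{\alpha_2}$ and $2^{\alpha_1}$ in the longer cases, and Lemma~\ref{lmm:lgalt}'s assertion $\alpha_1 = \lceil\lg n\rceil$, are doing the real work.)

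For the inductive step with $\ell\ge 2$: by Lemma~\ref{lmm:lgalt} we have $\alpha_1 = \lceil\lg n\rceil$ and $m := 2^{\alpha_1}-n$ has ABR $(\alpha_2,\ldots,\alpha_\ell)$, of length $\ell-1$. Note $n$ and $m$ have the same parity issue isolated in the last exponent $\alpha_\ell$: $n$ is odd iff $\alpha_\ell = 0$ iff $m$ is odd, since $m$'s ABR ends in the same $\alpha_\ell$. Now $n > 1$ (as $\ell\ge 2$ forces $n\ge 2^{\alpha_1}-2^{\alpha_2}\ge 2^{\alpha_2}\ge 2$, using \eqref{eqn:cond-alpha}), so \eqref{eqn:recc} gives $c(n) = c(m)+1$. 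By the induction hypothesis applied to $m$ (length $\ell-1$): if $\alpha_\ell = 0$ then $c(m) = \ell-1$, whence $c(n) = \ell$; if $\alpha_\ell\ge 1$ then $c(m) = (\ell-1)+1 = \ell$, whence $c(n) = \ell+1$. This is exactly the claimed formula, and the statement for $h$ follows immediately from $h(n) = c(n)-1$ for $n\ge 2$ (and $h(1) = 0 = \ell-1$ with $\ell = 1$ covering $n=1$).

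The only mild obstacle is bookkeeping around edge cases: one should check that the induction hypothesis is legitimately available for $m$, i.e.\ that $m\ge 1$ when $\ell\ge 2$ — which holds since $\alpha_1 > \alpha_2$ implies $m = 2^{\alpha_1}-n\ge 2^{\alpha_1} - (2^{\alpha_1}-1) \ge 1$ once $n < 2^{\alpha_1}$, the latter being part of Lemma~\ref{lmm:lgalt}'s content — and that the parity bookkeeping ($n$ odd $\iff \alpha_\ell = 0$) is exactly \eqref{eqn:altsum} read modulo $2$, since all $2^{\alpha_i}$ with $i<\ell$ are even when $\alpha_\ell = 0 < \alpha_{\ell-1}$. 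Everything else is a transparent unwinding of the recursion \eqref{eqn:recc}, so I expect the proof to be short.
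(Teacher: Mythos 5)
Your proof is correct and follows essentially the same route as the paper's: the base case is the powers of two ($\ell=1$), and the inductive step combines the recursion $c(n)=c(2^{\lceil\lg n\rceil}-n)+1$ from (\ref{eqn:recc}) with Lemma~\ref{lmm:lgalt}'s statement that $(\alpha_2,\ldots,\alpha_\ell)$ is the ABR of $2^{\alpha_1}-n$, which strips one exponent per step while preserving the parity indicator $\alpha_\ell$. The only blemish is the stray intermediate claim $c(2^{\alpha_1})=c(2^{\alpha_1-1})+1$, which is not what (\ref{eqn:recc}) gives, but you immediately replace it with the correct computation $c(2^{\alpha_1})=c(0)+1=2$.
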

\begin{proof}
By (\ref{eqn:recc}) and Corollary~\ref{cor:recurr-h} the statement is clearly true 
for any power of two $n=2^{\alpha}$. The rest follows by 
Lemma~\ref{lmm:lgalt} and induction on $n$,  and
(\ref{eqn:recc}) and Corollary~\ref{cor:recurr-h}
\end{proof}
We can extract alternative recursions for $c(n)$ and $h(n)$ from
the above Theorem~\ref{thm:k-formula}, different from
the ones given in (\ref{eqn:recc}) and Corollary~\ref{cor:recurr-h}.

Note that for an even $n$, the ABR of $n/2$ is obtained
by subtracting 1 from each of the $\alpha_i$
in the ABR of $n$. If $n=4m$, then $n/2 = 2m$ is still even
and we have by Theorem~\ref{thm:k-formula} that $c(4m) = c(2m)$.
If $n = 4m+2$, then $n/2 = 2m+1$ is odd and we 
obtain by Theorem~\ref{thm:k-formula} that $c(4m+2) = c(2m+1)+1$.

For odd $n$, we must consider the following cases.
If $n = 8m+1$ (resp.~$n= 8m+7$), then the ABR of $4m+1$ 
(resp.~$4m+3$) is obtained by
subtracting 1 from all $\alpha_i$ except the last one $\alpha_{\ell}$
in the ABR of $8m+1$ (resp.~$8m+7$). As all are odd numbers, 
we we have by Theorem~\ref{thm:k-formula} that $c(8m+1) = c(4m+1)$
(resp.~$c(8m+7) = c(4m+3)$.) If $n = 8m+3$, then
the ABR of $4m+1$ is obtained by
subtracting 1 from all $\alpha_i$ where $i\in\{1,\ldots,\ell-2\}$
in the ABR of $8m+3$, and replacing the last two summands
$2^2 - 2^0$ for $i\in\{\ell-1,\ell\}$ by $2^0$.
As both $8m+3$ and $4m+1$ are odd, and the latter has
on fewer terms in its ABR, we have $c(8m+3) = c(4m+1)+1$. Finally,
if $n=8m+5$, then the ABR of $4m+3$ is obtained by
subtracting 1 from all $\alpha_i$ where $i\in\{1,\ldots,\ell-2\}$
in the ABR of $8m+5$, then removing $-2^2$ for $i=\ell-1$ and 
replacing $2^0$ by $-2^0$ for $i=\ell$.
Again, as both $8m+5$ and $4m+3$ are odd, and the latter has
on fewer terms in its ABR, we have $c(8m+5) = c(4m+3)+1$.
 -- As $h(n) = c(n) - 1$, we therefore have the following
alternative recursion for $h(n)$.
\begin{corollary}
\label{cor:rec-mod8}
For $n\in\nats$ the number $h(n)$ of HCBPs of $n$ is determined
by 
\[
(h(n))_{n=1}^8 = (0,1,1,1,2,2,1,1)
\] 
and the following recurrence
\[
h(n) = \left\{ \begin{array}{ll}
                 h(n/2)         & \mbox{ if }n\equiv 0 (\bmod 4), \\
                 h(n/2) + 1     & \mbox{ if }n\equiv 2 (\bmod 4), \\
                 h((n+1)/2)     & \mbox{ if }n\equiv 1 (\bmod 8), \\
                 h((n-1)/2) + 1 & \mbox{ if }n\equiv 3 (\bmod 8), \\
                 h((n+1)/2) + 1 & \mbox{ if }n\equiv 5 (\bmod 8), \\
                 h((n-1)/2)     & \mbox{ if }n\equiv 7 (\bmod 8).
               \end{array}
      \right.
\]
\end{corollary}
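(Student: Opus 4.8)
The plan is to read the entire recurrence off Theorem~\ref{thm:k-formula}. Since $h(n)=c(n)-1$, it suffices to understand, in each of the six residue classes, how the number $\ell$ of summands in the ABR of $n$ and the parity of $n$ change when $n$ is replaced by the reduced argument on the right-hand side ($n/2$ when $n$ is even, $(n\pm1)/2$ when $n$ is odd). The one fact that drives everything is the mechanism behind Lemma~\ref{lmm:lgalt}: the ABR construction (\ref{eqn:altsum})--(\ref{eqn:cond-alpha}) is dictated by the binary digits of $n$, so a local change to those digits produces a local change to the ABR, and by uniqueness of the ABR it is enough to exhibit a candidate alternating sum obeying the gap condition (\ref{eqn:cond-alpha}) that evaluates to the reduced argument.

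First I would settle the initial values $(h(n))_{n=1}^{8}=(0,1,1,1,2,2,1,1)$ by writing down each ABR and applying Theorem~\ref{thm:k-formula} (or directly from (\ref{eqn:recc})). For $n>8$ with $n$ even, halving subtracts $1$ from every exponent of the ABR and leaves $\ell$ unchanged; hence for $n\equiv0\pmod4$ (where $n/2$ is again even) the parity type is preserved and $c(n)=c(n/2)$, while for $n\equiv2\pmod4$ (where $n/2$ is odd) the parity flips and $c(n)=c(n/2)+1$. For odd $n$ one has $\alpha_\ell=0$: if $n\equiv1\pmod8$ or $n\equiv7\pmod8$, forming $(n+1)/2$, resp.\ $(n-1)/2$, again subtracts $1$ from all exponents except the trailing $0$, so $\ell$ is unchanged and all numbers stay odd, giving $c(n)=c((n+1)/2)$, resp.\ $c(n)=c((n-1)/2)$. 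If $n\equiv3\pmod8$ the ABR of $n$ ends in $+2^2-2^0$, and that pair collapses to a single $+2^0$ in the ABR of $(n-1)/2$ (the remaining exponents dropping by $1$); if $n\equiv5\pmod8$ the ABR of $n$ ends in $-2^2+2^0$, and in the ABR of $(n+1)/2$ the $-2^2$ disappears and the last term becomes $-2^0$ (again the other exponents dropping by $1$). In both cases $\ell$ decreases by exactly one while the parity is unchanged, so $c(n)=c((n-1)/2)+1$, resp.\ $c(n)=c((n+1)/2)+1$. Subtracting $1$ everywhere via $h=c-1$ turns these six identities into the six displayed lines.

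The step I expect to be the real work is the pair of ``interesting'' odd classes $n\equiv3,5\pmod8$: one must pin down the exact trailing shape of the ABR of $n$, i.e.\ that $n\equiv\pm3\pmod8$ forces precisely the local pattern $\pm(2^2-2^0)$ at the end of the ABR (this uses both the block-to-difference rewriting in (\ref{eqn:altsum}) and the exception made for a trailing single $1$), then check that the proposed modification still satisfies $\alpha_{\ell-2}>\alpha_{\ell-1}+1$ and that it evaluates to $(n\mp1)/2$; uniqueness in Lemma~\ref{lmm:lgalt} then certifies it as the ABR of $(n\mp1)/2$, after which the change $\ell\mapsto\ell-1$ with unchanged parity feeds straight into Theorem~\ref{thm:k-formula}. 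The even cases and the cases $n\equiv1,7\pmod8$ are routine by comparison.
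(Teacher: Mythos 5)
Your proposal is correct and follows essentially the same route as the paper: the author likewise derives the recurrence from Theorem~\ref{thm:k-formula} by tracking how the ABR transforms under $n\mapsto n/2$ (subtract $1$ from every exponent) in the even cases and under $n\mapsto (n\pm1)/2$ in the four odd residue classes mod $8$, with exactly the same local surgeries $2^2-2^0\mapsto 2^0$ for $n\equiv3$ and $-2^2+2^0\mapsto-2^0$ for $n\equiv5$ accounting for the drop $\ell\mapsto\ell-1$. The details you flag as "the real work" (pinning down the trailing shape of the ABR in the classes $3,5\pmod 8$ and verifying the gap condition, then invoking uniqueness from Lemma~\ref{lmm:lgalt}) are exactly the checks the paper carries out, so nothing is missing.
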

{\sc Remark:} There is a strong resemblance between $c(n)$ and $h(n)$ on
one hand, and $s(n)$, the number of digits in the binary representation
of $n$ from (\ref{eqn:s}), on the other; firstly $s(n)$ satisfies the recursion
$s(n) = s(n - 2^{\lfloor \lg n\rfloor}) + 1$ with $s(0) = 0$, a very
similar recursion to the one in (\ref{eqn:recc}). These recursions
are both obtained ``from the top'', or ``from the left'', in the sense that we consider
what happens when we remove the first power of 2 in the usual binary
representation of $n$ and in the ABR of $n$ respectively. On the other
hand the recursion in (\ref{eqn:s}) and Corollary~\ref{cor:rec-mod8}
are both obtained ``from behind'', or ``from the right'', by considering 
the removal of the last power
of 2 in the usual binary representation of $n$ and in the ABR of 
$n$ respectively.

\vspace{ 3 mm}

Note that by Observation~\ref{obs:lg-1} we have
that $2\leq c(n)\leq \lceil \lg n\rceil$ and 
$1\leq h(n)\leq \lceil \lg n\rceil-1$ for every $n\geq 3$.
We conclude this section by a sharpening of this about the
number of HCBPs of $n$ with $\lceil \lg n\rceil = k$ given.
\begin{proposition}
\label{prp:shar}
For $k\geq 2$ and $\ell\in\{1,\ldots,k-1\}$
let ${\cal H}_k(\ell) = \{n : \lceil \lg n\rceil = k \mbox{ and } h(n) = \ell \}$.
Then 
\[
\left| {\cal H}_k(\ell)\right| = 2\binom{k-2}{\ell-1}.
\]
In particular for $k = \lceil \lg n\rceil$ we have in the extreme cases that 
\begin{enumerate}
  \item $h(n) = 1$ iff $n \in \{2^k - 1, 2^k\}$,
  \item $h(n) = k-1$ iff $n \in \{(2^{k+1} + (-1)^k)/3, (2^{k+1} + (-1)^k)/3 +(-1)^k\}$.
\end{enumerate}
\end{proposition}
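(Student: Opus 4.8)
The plan is to use Theorem~\ref{thm:k-formula} to translate the statement into a counting question about alternating binary representations. If $\lceil \lg n\rceil = k$ and $n$ has ABR $n = 2^{\alpha_1} - 2^{\alpha_2} + \cdots + (-1)^{\ell'-1}2^{\alpha_{\ell'}}$, then by Lemma~\ref{lmm:lgalt} we have $\alpha_1 = k$ forced, while the exponents must satisfy $k = \alpha_1 > \alpha_2 > \cdots > \alpha_{\ell'-1} > \alpha_{\ell'}+1 \geq 1$, so $\alpha_{\ell'} \geq 0$ and the remaining exponents $\alpha_2,\ldots,\alpha_{\ell'}$ live in $\{0,1,\ldots,k-1\}$ with a ``gap'' condition only between the last two. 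The key bookkeeping is the parity split in Theorem~\ref{thm:k-formula}: $h(n) = \ell' - 1$ if $n$ is odd (i.e.\ $\alpha_{\ell'} = 0$) and $h(n) = \ell'$ if $n$ is even (i.e.\ $\alpha_{\ell'} \geq 1$). So for a fixed target value $\ell$, I must count ABRs with $\alpha_1 = k$ that either (a) are odd with $\ell+1$ total terms, or (b) are even with $\ell$ total terms.

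First I would handle case (b), the even ABRs with $\ell$ terms and $\alpha_1 = k$. Here I need to choose $\alpha_2 > \cdots > \alpha_\ell$ from $\{1,\ldots,k-1\}$ subject to $\alpha_{\ell-1} > \alpha_\ell + 1$. Substituting $\beta_i = \alpha_i$ for $i < \ell$ and $\beta_\ell = \alpha_\ell + 1$ removes the gap condition and turns this into choosing $\ell-1$ strictly decreasing values from $\{2,\ldots,k-1\}$ (a set of size $k-2$), giving $\binom{k-2}{\ell-1}$ such $n$. Next, for case (a), the odd ABRs with $\ell+1$ terms and $\alpha_1 = k$: now $\alpha_{\ell+1} = 0$ is fixed, and I must choose $\alpha_2 > \cdots > \alpha_\ell$ from $\{1,\ldots,k-1\}$ with the gap condition $\alpha_\ell > \alpha_{\ell+1} + 1 = 1$, i.e.\ $\alpha_\ell \geq 2$. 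So I am choosing $\ell-1$ strictly decreasing values from $\{2,\ldots,k-1\}$, again a set of size $k-2$, giving another $\binom{k-2}{\ell-1}$. Adding the two disjoint cases gives $|{\cal H}_k(\ell)| = 2\binom{k-2}{\ell-1}$.

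For the two extreme cases I would just unwind these bijections. When $\ell = 1$: case (b) forces $\ell$-term $=1$-term even ABR, i.e.\ $n = 2^k$; case (a) forces $\ell+1 = 2$-term odd ABR $n = 2^k - 2^{\alpha_2}$ with $\alpha_2 = 0$ (the only choice, since $\alpha_2 \geq 2$ would be required if $\ell - 1 = 0$ but there are no exponents to choose, so $\alpha_2 = 0$ directly), giving $n = 2^k - 1$. When $\ell = k-1$: the binomial $\binom{k-2}{k-2} = 1$ in each case, so there are exactly two such $n$; the chains are forced to be the full decreasing chain, and a direct check (or the recursion $c(a_k) = c(a_{k-1}) + 1$ already noted in the text with $a_k = (2^{k+1} + (-1)^k)/3$) identifies them as $(2^{k+1}+(-1)^k)/3$ and its partner $(2^{k+1}+(-1)^k)/3 + (-1)^k$, according to the parity of $k$.

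The main obstacle I anticipate is getting the index bookkeeping exactly right in the gap-removal substitution — in particular keeping straight whether the last exponent ranges over $\{0,1,\ldots\}$ or $\{1,2,\ldots\}$ in the two parity cases, and verifying that after the shift both cases land in a set of the same size $k-2$ so that the two contributions genuinely coincide. A secondary check is confirming disjointness of cases (a) and (b): an odd $n$ has $\alpha_{\ell'} = 0$ while an even $n$ has $\alpha_{\ell'} \geq 1$, so no $n$ is counted twice, and every $n$ with $\lceil\lg n\rceil = k$ and $h(n) = \ell$ falls into exactly one case by the parity dichotomy of Theorem~\ref{thm:k-formula}. One should also double-check the small edge case $\ell = k-1$ against $\ell = 1$ when $k = 2$, where both extremes coincide and $\{2^k-1, 2^k\} = \{3,4\}$ with $h(3) = h(4) = 1$, matching $2\binom{0}{0} = 2$.
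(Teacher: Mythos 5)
Your proposal is correct and follows essentially the same route as the paper: fix $\alpha_1 = k$ in the ABR, split by parity of $n$ (equivalently $\alpha_{\ell'}=0$ versus $\alpha_{\ell'}\geq 1$), and in each case biject the remaining exponents with $(\ell-1)$-subsets of $\{2,\ldots,k-1\}$ via the same shift of the last exponent, then settle the extreme cases by exhibiting the two witnesses and invoking the count. The only cosmetic difference is that the paper verifies $h(2^k)=h(2^k-1)=1$ and $h(b_k)=k-1$ directly from the recurrence of Corollary~\ref{cor:recurr-h} rather than by unwinding the bijection, which changes nothing of substance.
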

\begin{proof}
Assume that $\lceil \lg n\rceil = k$ and hence $2^{k-1} < n \leq 2^k$.
If $(\alpha_1,\ldots,\alpha_{i})$ determine the ABR of $n$, then
$\alpha_1 = k$ is determined. 

{\sc First case:} If $n$ is even, then $\alpha_{i} \geq 1$ and hence
the distinct decreasing exponents $\alpha_2, \cdots, \alpha_{i-1}, \alpha_{i}+1$ 
are determined by an $(i-1)$-subset of $\{2,\ldots,k-1\}$ of which
there are exactly $\binom{k-2}{i-1}$. By Theorem~\ref{thm:k-formula} 
the number of even $n\in {\cal H}_k(\ell)$ 
is therefore given by $\binom{k-2}{\ell-1}$.

{\sc Second case:} If $n$ is odd, then $\alpha_{i} = 0$ is determined
and hence the distinct decreasing exponents $\alpha_2, \cdots, \alpha_{i-1}$
are determined by an $(i-2)$-subset of $\{2,\ldots,k-1\}$ of which
there are exactly $\binom{k-2}{i-2}$. By Theorem~\ref{thm:k-formula} 
the number of odd $n\in {\cal H}_k(\ell)$ 
is therefore given by $\binom{k-2}{(\ell+1) - 2} = \binom{k-2}{\ell-1}$.
Hence $\left| {\cal H}_k(\ell)\right| = 2\binom{k-2}{\ell-1}$, which completes
the first part of the Proposition.

By Corollary~\ref{cor:recurr-h} we clearly have $h(2^k) = h(2^k-1) = 1$,
and by the first part there are precisely two numbers $n$ with
$\lceil \lg n\rceil = k$ with $h(n) = 1$. 

Finally, if $n = a_k = (2^{k+1}+(-1)^k)/3 = 
2^k - 2^{k-1} + \cdots + (-1)^{k-2}2^2 + (-1)^{k-1}2^0$,
then we have seen right above Observation~\ref{obs:lg-1} that $h(n) = c(n) - 1 = k-1$.
Also, for an even $n  = b_k = a_k + (-1)^k = 
2^k - 2^{k-1} + \cdots + (-1)^{k-3}2^3 + (-1)^{k-2}2^2$, 
then we have by Corollary~\ref{cor:recurr-h}
that $h(b_k) = h(b_{k-1}) + 1 = h(b_2) + k-2 = h(4) + k-2 = k-1$.
Since by the first part there are at most two numbers $n$ with 
$\lceil \lg n\rceil = k$ and $h(n) = k-1$, this completes the proof.
\end{proof}

\subsection*{Acknowledgments}  

Sincere thanks to the anonymous referees for ...
%related references,
%helpful comments and spotting some well-hidden typos. bla bla ...

\flushright{\today}


\begin{thebibliography}{10}

\bibitem{wrong}
\newblock Sequence A007818,
\newblock {\em The On-Line Encyclopedia of Integer Sequences (OEIS)}: 
\newblock {\tt http://oeis.org/A007818}.

\bibitem{sum-of-1s}
\newblock Sequence A000788,
\newblock {\em The On-Line Encyclopedia of Integer Sequences (OEIS)}: 
\newblock {\tt http://oeis.org/A000788}.

\bibitem{Trott}
\newblock Michael Trott:
\newblock The Mathematica GuideBook for Programming,  
\newblock {\em Springer Verlag}, New York, (2004).
\newblock {\tt http://www.mathematicaguidebooks.org/}.

\bibitem{Tall}
\newblock David Tall:
\newblock The blancmange function. Continuous everywhere but differentiable nowhere,  
\newblock {\em Math.~Gaz.~}, {\bf 66}:no.~435, 11--22, (1982).

\bibitem{S.Hart-DM-note}
\newblock Sergiu Hart: 
\newblock A note on the edges of the $n$-cube,
\newblock {\em Discrete Math.~}, {\bf 14}:no.~2, 157 -- 163, (1976). 

\bibitem{Mcllroy-SIAM-binary}
\newblock M.~D.~McIlroy: 
\newblock The number of $1$'s in binary integers: bounds and extremal properties,
\newblock {\em SIAM J.~Comput.~}, {\bf 3}, 255 -- 261, (1974). 

\bibitem{Delange-sur-la}
\newblock Hubert Delange: 
\newblock Sur la fonction sommatoire de la fonction``somme des chiffres''. (French),
\newblock {\em Enseignement Math.~}, {\bf (2) 21}, no.~1, 31 -- 47, (1975). 

\bibitem{Harper-wrong}
\newblock L.~H.~Harper: 
\newblock Optimal assignments of numbers to vertices,
\newblock {\em J.~Soc.~Indust.~Appl.~Math.~}, {\bf 12}, 131 -- 135, (1964). 

\bibitem{Bernstein-right}
\newblock A.~J.~Bernstein: 
\newblock Maximally connected arrays on the $n$-cube,
\newblock {\em SIAM J.~Appl.~Math.~}, {\bf 15}, 1485 -- 1489, (1967). 

\bibitem{Greene-Knuth}
\newblock Daniel H.~Greene; Donald E.~Knuth: 
\newblock Mathematics for the analysis of algorithms. Third edition, 
\newblock Progress in Computer Science and Applied Logic, 1. 
\newblock {\em Birkhäuser Boston, Inc.~}, Boston, MA, (1990).

\bibitem{Wang-tighter}
\newblock Biing-Feng Wang: 
\newblock Tighter bounds on the solution of a divide-and-conquer maximin recurrence,
\newblock {\em J.~Algorithms}, {\bf 23}, no.~2, 329 -- 344, (1997). 

\bibitem{Li-Reingold-maximin}
\newblock Zhiyuan Li; Edward M.~Reingold: 
\newblock Solution of a divide-and-conquer maximin recurrence,
\newblock {\em SIAM J.~Comput.~}, {\bf 18} no.~6, 1188 -- 1200, (1989). 

\bibitem{Bellman-Shapiro}
\newblock Richard Bellman; Harold N.~Shapiro: 
\newblock On a problem in additive number theory, 
\newblock {\em Ann.~of Math.~}, {\bf (2) 49}, 333 -- 340, (1948). 

\end{thebibliography}
\end{document}